\documentclass[11pt]{article}
\usepackage[top=27mm, bottom=27mm, left=22mm, right=22mm]{geometry}
\usepackage[square, comma, sort&compress, numbers]{natbib}
\usepackage{hyperref}
\usepackage{amsfonts}
\usepackage{mathrsfs}
\usepackage{comment}
\usepackage{amsmath}
\usepackage{amssymb}
\usepackage{amsthm}
\usepackage{amscd}
\usepackage{graphicx}
\usepackage{indentfirst}
\usepackage[all]{xy}
\usepackage{titlesec}
\usepackage{enumerate}
\usepackage{bm}
\usepackage{relsize}
\usepackage{enumitem}
\usepackage{color}
\usepackage{dsfont}
\usepackage{arydshln}
\usepackage[capitalize]{cleveref}
\usepackage{appendix}
\usepackage[normalem]{ulem}
\newtheorem{theorem}{Theorem}[section]
\newtheorem{lemma}[theorem]{Lemma}

\newtheorem{corollary}[theorem]{Corollary}
\newtheorem{conjecture}[theorem]{Conjecture}
\newtheorem{definition}[theorem]{Definition}
\newtheorem{construction}[theorem]{Construction}

\newtheorem{claim}[theorem]{Claim}

\newcommand{\ma}{\mathcal}

\newcommand{\mr}{\mathscr}

\newcommand{\fr}{\frac}
\newcommand{\lc}{\lceil}
\newcommand{\rc}{\rceil}

\newcommand{\ep}{\epsilon}

\newcommand{\e}{{\rm{ex}}}

\begin{document}

\title{Sharp asymptotic bounds for uniform union-free hypergraphs}

\date{}

\makeatletter
\def\thanks#1{\protected@xdef\@thanks{\@thanks
        \protect\footnotetext{#1}}}
\makeatother

\author{Miao Liu\thanks{M. Liu is with the Research Center for Mathematics and Interdisciplinary Sciences, Shandong University, Qingdao, China, and Extremal Combinatorics and Probability Group (ECOPRO), Institute for Basic Science (IBS), Daejeon, South Korea. Email: liumiao10300403@163.com.}, Chong Shangguan\thanks{C. Shangguan is with the Research Center for Mathematics and Interdisciplinary Sciences, Shandong University, Qingdao 266237, China, and the Frontiers Science Center for Nonlinear Expectations, Ministry of Education, Qingdao 266237, China. Email: theoreming@163.com.}, 
and Chenyang Zhang \thanks{C. Zhang is with the Research Center for Mathematics and Interdisciplinary Sciences, Shandong University, Qingdao 266237, China. Email: lxzcyang@163.com.}
}

\maketitle

\begin{abstract}
    \noindent An $r$-uniform hypergraph is called $t$-union-free if any two distinct subsets of at most $t$ edges have distinct union. The study of union-free hypergraphs has multiple origins and a long history, dating back to the works of Kautz and Singleton (1964) in coding theory, Bollob\'as and Erd\H{o}s (1976) in combinatorics, and Hwang and S\'os (1987) in group testing. Let $U_t(n,r)$ denote the maximum number of edges in an $n$-vertex $t$-union-free $r$-uniform hypergraph.  In this paper, we determine the asymptotic behavior of $U_t(n,r)$, up to a lower order term, for almost all $t\ge 3$ and $r\ge 3$. This significantly advances the understanding of this extremal function, as previously, only the asymptotics of $U_2(n,3)$ and $U_2(n,4)$ were known. As a key ingredient of our proof, we establish the existence of near-optimal locally sparse induced hypergraph packings, which is of independent interest.
\end{abstract}

\section{Introduction}

\noindent Given a family $\mr{L}$ of $r$-uniform hypergraphs ($r$-graphs for short), an $r$-graph is said to be {\it $\mr{L}$-free} if it contains no copy of any member in $\mr{L}$. The {\it extremal number} $\e_r(n,\mr{L})$ is the maximum number of edges in a $\mr{L}$-free $r$-graph on $n$ vertices. 
Katona, Nemetz, and Simonovits \cite{Katona-Nemetz-Simonovits-Turan-density} showed that for every $\mr{L}$, the {\it Tur\'an density}  $\pi_r(\mr{L}):=\lim_{n\rightarrow\infty}\frac{\e_r(n,\mr{L})}{\binom{n}{r}}$ always exists. If $\pi_r(\mr{L})>0$ then we call $\mr{L}$ {\it non-degenerate}; otherwise, we call it {\it degenerate}. For a degenerate $\mr{L}$, if there exists a real number $\alpha\in(0,r)$ and positive constants $c_1,c_2$ such that $c_1 n^\alpha\le \e_r(n,\mr{L})\le c_2 n^\alpha$, for all sufficiently large $n$, then we call $\alpha$ the {\it Tur\'an exponent} of $\mr{L}$. 
%Such an exponent, if it exists, is necessarily unique. 
Moreover, if the limit $$\pi_r^d(\mr{L}):=\lim_{n\rightarrow\infty}\frac{\e_r(n,\mr{L})}{n^{\alpha}}$$ exists and lies in $(0,\infty)$, then we call $\pi_r^d(\mr{L})$ the {\it degenerate Tur\'an density} of $\mr{L}$-free $r$-graphs. %The requirement that the limit is positive and finite makes the exponent $\alpha$ unique whenever it exists. 

In general, even if we know the Tur\'an exponent of a degenerate $\mr{L}$, it is still a difficult task to determine whether the corresponding degenerate density exists, not to mention its precise value. For example, let $C_i$ denote the cycle of length $i$. Classic results in extremal graph theory showed that $\e_2(n,C_3)=\lfloor n^2/4\rfloor$ \cite{Mantel-K_3} and $\e_2(n,C_4)=\frac{1}{2}n^{3/2}+O(n)$ \cite{reiman1958,erdos-C_4-1966,brown-C_4-1966,KST-1954}.
However, the asymptotics of the closely related $\e_2(n,\{C_3,C_4\})$ is yet unknown. A famous conjecture of Erd\H{o}s \cite{erdos1975some} in 1975 asserts that 
\begin{equation}\label{eq:erdos-conjecture}
    \lim_{n\rightarrow\infty}\frac{\e_2(n,\{C_3,C_4\})}{n^{3/2}}=\frac{1}{2\sqrt{2}}.
\end{equation}

\noindent Despite much effort in the last 50 years, not much progress has been made. It is not known whether the aforementioned limit exists. The current record is $$\frac{1}{2\sqrt{2}}n^{3/2}+\Omega(n^{1.25})\le\e_2(n,\{C_3,C_4\})\le\e_2(n,C_4)=\frac{1}{2}n^{3/2}+O(n),$$ where the lower bound is due to a recent work by Ma and Yang \cite{Ma-Yang25}.  

The {\it union-free hypergraphs} studied in this paper are an extension of $\{C_3,C_4\}$-free graphs to hypergraphs. 
For fixed positive integers $r,t$, an $r$-graph $\mathcal{H}$ is called {\it $t$-union-free} if for arbitrary two distinct subsets $\ma{A},\ma{B}\subseteq\mathcal{H}$, each consisting of at most $t$ edges, we have $\cup_{A\in\ma{A}} A\neq \cup_{B\in\ma{B}} B$. Let $U_t(n,r)$ denote the maximum number of edges in an $n$-vertex $t$-union-free $r$-graph. It is clear that a graph is $\{C_3,C_4\}$-free if and only if it is 2-union-free, and hence $U_2(n,2)=\e_2(n,\{C_3,C_4\})$. Our main concern is to study the degenerate Tur\'an density of general $t$-union-free $r$-graphs for $t\ge 3$ and $r\ge 3$.

The study of union-free hypergraphs has multiple origins, motivated by problems and applications in coding theory \cite{kautz-singleton-superimposed-codes}, combinatorics \cite{erdos1977problems}, and group testing \cite{hwang1987non}. Historically, in 1964, Kautz and Singleton \cite{kautz-singleton-superimposed-codes} first introduced the notion of $t$-union-free hypergraphs\footnote{with the name ``uniquely decipherable codes of order $t$''}, motivated by applications in information retrieval and data communication. Research on related topics is still an active area in coding theory, see \cite{Vorobyev-Ilya21,Daniil-Nikita-Ilya24,yachkov-V-P-S14}. Unaware of their work, in 1977, Bollob\'as and Erd\H{o}s (see page 11 of \cite{erdos1977problems}) posed the combinatorial problem of determining the maximum number of edges in an $n$-vertex $2$-union-free $r$-graph and showed that $U_2(n,3)=\Theta(n^2)$. In 1987, Hwang and S\'os \cite{hwang1987non} introduced the closely related notion {\it $t$-Sidon family}. A hypergraph has this property if any two distinct subsets of {\it exactly} $t$ edges have distinct union. 

For 2-union-free hypergraphs, Frankl and F\"uredi determined the limits $\lim_{n\to\infty}\frac{U_2(n,3)}{n^2}=\frac{1}{6}$ \cite{FRANKL-F84} and $\lim_{n\to\infty}\frac{U_2(n,4)}{n^3}=\frac{1}{24}$ \cite{Frankl-Furedi-2-union-free}. To the best of our knowledge, these are the only two cases for which the sharp asymptotic bounds on $U_t(n,r)$ are known. In \cite{Frankl-Furedi-2-union-free}, Frankl and F\"uredi also showed that    
\begin{equation}\label{eq:Frankl-Furedi-U_2(n,r)}
    U_2(n,r)=\Theta(n^{\lc 4r/3\rc/2}).
\end{equation}

For $t\ge 3$, $t$-union-free hypergraphs are closely related to an extensively studied combinatorial object, known as cover-free hypergraphs, introduced independently by Kautz and Singleton \cite{kautz-singleton-superimposed-codes} and Erd\H{o}s, Frankl, and F\"uredi \cite{Erdos-Frankl-Furedi-2-cover-free,Erdos-Frankl-Furedi-r-cover-free}. An $r$-graph $\mathcal{H}$ is {\it $t$-cover-free} if no edge is contained in the union of any $t$ other distinct edges. Cover-free and union-free hypergraphs have various applications in coding theory, group testing, and cryptography. The reader is referred to the survey \cite{idalino2022survey} for more details. 

The following fact connects union-free and cover-free hypergraphs \cite{Furedi-Ruszinko-no-grid}. 

\vspace{10pt}

\noindent\textbf{Fact.}  If $\ma{H}$ is $t$-cover-free then it is $t$-union-free; if $\ma{H}$ is $t$-union-free then it is $(t-1)$-cover-free.

\vspace{10pt}

Let $F_t(n,r)$ denote the maximum number of edges in an $n$-vertex $t$-cover-free $r$-graph. By the above fact, one can infer that
\begin{align}\label{eq:union-free-and-cover-free}
    F_t(n,r)\le U_t(n,r)\le F_{t-1}(n,r).
\end{align}

\noindent Frankl and F\"uredi \cite{Frankl-Furedi-colored-packing} proved sharp asymptotic bounds on $F_t(n,r)$ for all $t\ge 2,~r\ge 3$, i.e.,
\begin{equation}\label{eq:cff-frankl-furedi}
    \lim_{n\to \infty}\frac{F_{t}(n,r)}{n^{\lc\fr{r}{t}\rc}}=\frac{1}{\lc\fr{r}{t}\rc!} \cdot \frac{1}{\binom{r}{\lc\fr{r}{t}\rc}-m(r,\lc\fr{r}{t}\rc,\lambda)},
\end{equation}

\noindent where $0\le \lambda\le t-1$ is the unique integer satisfying $r=\lc\fr{r}{t}\rc(\lambda+1)+(\lc\fr{r}{t}\rc-1)(t-\lambda-1)$, and for positive integers $n,r,\lambda$, $m(n,r,\lambda)$ is the maximum number of edges in an $n$-vertex $r$-graph with no $\lambda+1$ pairwise disjoint edges (i.e., it has matching number at most $\lambda$).

It follows from \eqref{eq:union-free-and-cover-free} and \eqref{eq:cff-frankl-furedi} that
\begin{equation}\label{eq:cff-bound}
    \Omega(n^{\lceil\frac{r}{t}\rceil})=F_t(n,r)\le U_t(n,r)\le F_{t-1}(n,r)=O(n^{\lceil\frac{r}{t-1}\rceil}).
\end{equation}

\noindent There have been several improvements upon \eqref{eq:cff-bound} for different parameter regimes. In 2013, F\"uredi and Ruszink\'{o} \cite{Furedi-Ruszinko-no-grid} established nearly quadratic growth for the case $t=r$, i.e., they showed $n^{2-o(1)}<U_r(n,r)=O(n^2)$ for $r\ge 4$ and $U_3(n,3)=\Omega(n^{5/3})$. They also conjectured that $U_r(n,r)=o(n^2)$ (see \cite[Conjecture 1.11]{Furedi-Ruszinko-no-grid}). In 2020, Shangguan and Tamo \cite{Shangguan-Tamo-canc-and-union-free} proved that for all $t\ge 3$ and $r\ge 3$,
\begin{equation}\label{eq:Shangguan-Tamo-union-free}
    U_t(n,r)=\Omega(n^{\frac{r}{t-1}}).
\end{equation}

\noindent Putting \eqref{eq:union-free-and-cover-free}, \eqref{eq:cff-frankl-furedi}, \eqref{eq:cff-bound}, and \eqref{eq:Shangguan-Tamo-union-free} together, it is not hard to see that for $t\ge 2$, $k\ge 2$, and $0\le a\le t-1$, 
\begin{equation}\label{eq:ST-union-free}
    \begin{aligned}
        \Omega(n^{\max\left\{\lceil\frac{tk-a}{t+1}\rceil,\frac{tk-a}{t}\right\}})&=U_{t+1}(n,tk-a)\\&\le F_t(n,tk-a)\le\frac{1}{k!}\cdot \frac{1}{\binom{tk-a}{k}-m(tk-a,k,t-a-1)}\cdot n^k+o(n^k).
    \end{aligned}
\end{equation}
\noindent Note that for notational convenience, in \eqref{eq:ST-union-free} and below we use $U_{t+1}(n,\cdot)$ instead of $U_t(n,\cdot)$.

Our main contribution is a new lower bound for $U_{t+1}(n,r)$, showing that for almost all $t\ge 2,~r\ge 3$, except for $\{U_3(n,2s-1),U_{s+1}(n,s+1),U_{s+1}(n,s+2):s\ge 2\}$ and $U_4(n,7)$,
the upper bound in \eqref{eq:ST-union-free} is asymptotically sharp, up to a lower order term. In other words, we determine the degenerate Tur\'an densities of these union-free hypergraphs.

\begin{theorem}\label{thm:main-union-free}
    Let $t\ge 2$, $k\ge 2$, and $0\le a \le t-1$ be integers. Then we have
    \begin{align}\label{eq:union-free}
        \lim_{n\to \infty}\frac{U_{t+1}(n,tk-a)}{n^k}=\frac{1}{k!}\cdot \frac{1}{\binom{tk-a}{k}-m(tk-a,k,t-a-1)},
    \end{align}
    except for $(t+1,tk-a)\in\{(3,2s-1),(s+1,s+1),(s+1,s+2):s\ge 2\}\cup\{(4,7)\}.$
\end{theorem}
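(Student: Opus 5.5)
The upper bound in \eqref{eq:union-free} is already available. By \eqref{eq:union-free-and-cover-free} we have $U_{t+1}(n,tk-a)\le F_t(n,tk-a)$, and \eqref{eq:cff-frankl-furedi} with $r=tk-a$ gives $\lceil r/t\rceil=k$ (since $0\le a\le t-1$) together with $\lambda=t-a-1$. So the whole task is the matching lower bound. For that we need an $n$-vertex $(t+1)$-union-free $(tk-a)$-graph with $(1-o(1))\frac{n^k}{k!\,(\binom{tk-a}{k}-m(tk-a,k,t-a-1))}$ edges.

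The plan is to imitate the Frankl--F\"uredi construction for cover-free families, replacing ordinary packings by a locally sparse induced packing. Fix an $r$-vertex $k$-graph $\mathcal{M}$ with matching number at most $\lambda=t-a-1$ and with $m(r,k,\lambda)$ edges. Let $\mathcal{G}=K_r^{(k)}\setminus\mathcal{M}$ be its complement, so $|\mathcal{G}|=\binom{r}{k}-m(r,k,\lambda)$. We want a family of $r$-sets $E_1,\dots,E_N$ in $[n]$ such that:
\begin{enumerate}
\item the copies of $\mathcal{G}$ placed on the $E_i$ are pairwise edge-disjoint, i.e.\ every $k$-set is an ``own'' $k$-subset of at most one $E_i$;
\item the packing is nearly perfect, with $N=(1-o(1))\binom{n}{k}/|\mathcal{G}|$;
\item the packing is induced, i.e.\ $|E_i\cap E_j|<k$ unless the shared $k$-sets lie in the $\mathcal{M}$-parts;
\item the packing is locally sparse: no small configuration of $O(t)$ blocks spans too few vertices, apart from the configurations forced by the design.
\end{enumerate}

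I would obtain this from a random greedy or R\"odl-nibble argument, in the style of the conflict-free hypergraph matching method of Glock--Joos--Kim--K\"uhn--Lichev and Delcourt--Postle. The auxiliary hypergraph has one vertex per $k$-set of $[n]$ and one edge per placed copy of $\mathcal{G}$. The conflicts are the finitely many bad local configurations, each of which has few enough copies to be forbidden while losing only $o(n^k)$ edges.

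The key combinatorial step is to check that such a family is $(t+1)$-union-free. Take distinct $\mathcal{A},\mathcal{B}$ with $|\mathcal{A}|,|\mathcal{B}|\le t+1$ and equal unions, and pick $E\in\mathcal{A}\setminus\mathcal{B}$. Then $E$ is covered by at most $t+1$ edges of $\mathcal{B}$. Because $\mathcal{M}$ has matching number at most $\lambda$ and $r=k(\lambda+1)+(k-1)(t-\lambda-1)$, covering $E$ by $t$ other blocks forces some block to contain a $k$-subset of $E$ lying in $\mathcal{G}$. This is the cover-free argument, and it contradicts the packing property.

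With $t+1$ covering blocks this counting no longer suffices, and this is where local sparsity comes in. The configuration ``$\bigcup\mathcal{A}=\bigcup\mathcal{B}$'' with all pairwise intersections of size below $k$ (outside $\mathcal{M}$) spans a vertex set small relative to its number of blocks. Concretely, I would compute the vertex count of $\mathcal{A}\cup\mathcal{B}$ against the $kj$-type threshold and show that it exceeds the expected $O(1)$ count. Such configurations are then excluded in the nibble with negligible loss.

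The main obstacle is this sparsity accounting. For each union configuration one must show that the number of its copies in $[n]$ is $o(n^k)$ relative to the packing, i.e.\ that $|V|<k\cdot\#\text{blocks}-\epsilon$ in the appropriate weighted sense. Exactly in the listed exceptional cases ($t=2$ with $r$ odd, $r\in\{t,t+1\}$, and $(t+1,r)=(4,7)$) this inequality fails, which is why they are excluded. I would handle it by a case analysis on how $E$ splits among the blocks of $\mathcal{B}$, using the intersection constraints to bound $|\bigcup\mathcal{A}|$ from below. The other difficulty is proving the induced packing result itself: verifying the degree and codegree conditions of the conflict-free matching theorem for the conflict system.
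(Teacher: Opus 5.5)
\textbf{There is a genuine gap.} Your lower-bound route packs copies of $\binom{[r]}{k}\setminus\mathcal{M}$ into an induced packing that is $(\ell r-(\ell-1)k-1,\ell)$-free for $\ell\le 2t+2$, and takes the blocks. This is exactly the paper's argument for $1\le a\le t-1$, and it also works for $a=0$ when $t\ge 3$.

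Your claim that the sparsity accounting fails only in the excluded cases is false. (In the $U_{t+1}$ indexing those cases are $r\in\{t+1,t+2\}$, not $\{t,t+1\}$.) The accounting fails for $a=0$, $t=2$, i.e.\ for $U_3(n,2k)$ with $k\ge 3$, which the theorem covers. Here $r=2k$ and $\mathcal{M}$ is intersecting. Take pairwise disjoint sets $S,R_1,R_2$ of size $k$, $P,Q'$ of size $x$, and $Q,P'$ of size $k-x$, with $1\le x\le k-1$. Form the blocks $A=S\cup P\cup Q$, $B=S\cup P'\cup Q'$, $C_1=P\cup P'\cup R_1$, $C_2=Q\cup Q'\cup R_2$.
\begin{itemize}
\item Then $C_1\cup C_2\cup A=C_1\cup C_2\cup B$, a violation of $3$-union-freeness.
\item The only intersection of size $k$ is $A\cap B=S$, and $S$ may lie in $\mathcal{M}$ for both copies, whatever extremal $\mathcal{M}$ you fix.
\item The four blocks span $5k=4r-3k$ vertices, and every subfamily is at or above its threshold.
\end{itemize}
So inducedness plus local sparsity does not exclude this configuration. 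It sits exactly at critical density, so its count is of order $n^k$, not $o(n^k)$.

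Moreover, pairs like $A,B$ are forced. A $k$-set that is an edge of one copy lies in no other block. These edges cover a $(1-\epsilon)$-fraction of all $k$-sets. Hence the roughly $\binom nk$ non-edge incidences pile up on an $\epsilon$-fraction of $k$-sets, giving order $n^k/\epsilon$ pairs of blocks meeting in $k$ vertices. For each such pair, the straddling $k$-sets $P\cup P'$ and $Q\cup Q'$ are typically covered. You also cannot add this configuration to the conflict system of Lemma~\ref{lem:useful-lem}. Fixing $A$ and $B$ leaves $\Theta(n^{2k})=\Theta(D^2)$ completions, which violates codegree condition (ii).

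\textbf{The missing idea} is the paper's separate construction for $a=0$. Rather than packing the star $\binom{[tk]}{k}\setminus\binom{[tk-1]}{k}$ directly, the paper takes an auxiliary $tk$-graph $\mathcal{F}$ on $m$ vertices (Lemma~\ref{lem:uf-LB-F}). Its edges are $G_i\cup\{u_i\}$, where $\{G_i\}$ is a locally sparse $(tk-1)$-graph on $q\ll m$ vertices with $\gg q^k$ edges, and the $u_i$ are private vertices. The paper then packs the $k$-shadow $\sigma_k(\mathcal{F})$ in an induced, locally sparse way. This gives three things:
\begin{itemize}
\item $|\mathcal{F}|/|\sigma_k(\mathcal{F})|\to 1/\binom{tk-1}{k-1}$, the same density as the star.
\item Each copy is union-free because of the private vertices.
\item By inducedness of the shadow packing, an edge meets the vertex set of any other copy in at most $k-1$ points. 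So two edges sharing $k$ vertices lie in the same copy.
\end{itemize}
Apply this to the configuration above. It forces $A$ and $B$ into one copy $\mathcal{F}_i$. Since $\mathcal{F}_i$ is itself $3$-union-free, some $C_j\notin\mathcal{F}_i$, and that $C_j$ meets $A\Delta B$ in at most $k-1$ points. The other $C$ therefore meets $A\Delta B$ in at least $k+1$ points, contradicting $(4k-1,3)$-freeness. Without some such device, your plan does not prove the theorem for $U_3(n,2k)$ with $k\ge 3$.
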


Below we briefly discuss the exact value of $m(r,k,\lambda)$. In 1965, Erd\H{o}s \cite{erdos1965problem} proposed the following well-known conjecture.
\begin{conjecture}[Erd\H{o}s Matching Conjecture \cite{erdos1965problem}]
    For all positive integers $r,k,\lambda$ with $r\ge (\lambda+1)k$,
    $$m(r,k,\lambda)=\max\left\{\binom{r}{k}-\binom{r-\lambda}{k},\binom{(\lambda+1)k-1}{k}\right\}.$$
\end{conjecture}

\noindent This conjecture has been extensively studied and is now partially solved, see \cite{Frankl-matching,frankl2013improved,frankl2017maximum,Frankl-Kupavskii-The-Erdos-Matching-Con,kleitman1968maximal,EMC2023Dmitriy-Andrey,EMC2025Ryan-Bal,alon-frankl-huang-rodl-2012large,frankl2026towards,hou2026finite} and the references therein. When $\lambda=0$, we naively have $m(r,k,0)=0$. When $\lambda=1$, the celebrated Erd\H{o}s-Ko-Rado theorem \cite{erdos-ko-rado1961} implies that $m(r,k,1)=\binom{r-1}{k-1}$. When $k=2,3$, the conjecture was resolved by Erd\H{o}s and Gallai \cite{erdos-gallai1959} and by Frankl \cite{frankl2017maximum}, respectively. Recently, Frankl, Lu, Ma, and Wu \cite{frankl2026towards} and Hou, Hu, and Liu \cite{hou2026finite} have made substantial progress for $k=4$. For general $k$, the current records can be found in \cite{kleitman1968maximal,frankl2013improved,Frankl-Kupavskii-The-Erdos-Matching-Con,Frankl-matching}.

\iffalse
Very recently, Frankl, Lu, Ma, and Wu proved the conjecture for $k=4$ and sufficiently large $n\ge 5s$ \cite{frankl2026towards}, and Hou, Hu, and Liu \cite{hou2026finite} proved the conjecture for $s\ge 6961$. 
For general $k$, the following results are known:

\begin{itemize}
    \item when $r=tk$, $m(tk,k,t-1)=\binom{tk-1}{k}$ \cite{kleitman1968maximal};
    \item when $r\ge (2\lambda +1)k -\lambda$, $m(r,k,\lambda)=\binom{r}{k}-\binom{r-\lambda}{k}$ \cite{frankl2013improved};
    \item for sufficiently large $\lambda$ and $r\ge \frac{5}{3}\lambda k -\frac{2}{3}\lambda$, $m(r,k,\lambda)=\binom{r}{k}-\binom{r-\lambda}{k}$ \cite{Frankl-Kupavskii-The-Erdos-Matching-Con};
    \item for $(\lambda +1)k\le r\le (\lambda +1)(k+\epsilon)$, where $\epsilon$ depends on $k$, $m(r,k,\lambda)=\binom{(\lambda +1)k-1}{k}$ \cite{Frankl-matching}.
\end{itemize}
\fi

Substituting the exact value of $m(tk-a,k,t-a-1)$ into Theorem~\ref{thm:main-union-free} gives us the exact value of the corresponding $\lim_{n\to \infty}\frac{U_{t+1}(n,tk-a)}{n^k}$. We list some of them as follows.
\begin{corollary}
    %Let $t\ge 2$, $k\ge 2$, and $0\le a\le t-1$ be integers. 
    Under the assumptions of Theorem~\ref{thm:main-union-free}, and excluding the exceptional cases listed in that theorem, then the following explicit values of $\lim_{n\to\infty}\frac{U_{t+1}(n,tk-a)}{n^k}$ hold:%, whenever the corresponding parameters are not among the exceptional cases in Theorem~\ref{thm:main-union-free}:
    \begin{itemize}
        \item For $k=2$, Erd\H{o}s and Gallai \cite{erdos-gallai1959} proved that
        $$m(2t-a,2,t-a-1)=\max\left\{\binom{2t-2a-1}{2},\binom{2t-a}{2}-\binom{t+1}{2}\right\}.$$
        Therefore, by Theorem~\ref{thm:main-union-free}, we have
        $$\lim_{n\to\infty}\frac{U_{t+1}(n,2t-a)}{n^2}=
        \begin{cases}
        \displaystyle
        \frac{1}{2}\cdot\frac{1}{\binom{t+1}{2}}, %=\frac{1}{t(t+1)},
        & \text{if } t\le 3a+2,\\[1.5ex]
        \displaystyle
        \frac{1}{2}\cdot\frac{1}{\binom{2t-a}{2}-\binom{2t-2a-1}{2}},
        %=\frac{1}{(a+1)(4t-3a-2)}, 
        & \text{if } t>3a+2.
        \end{cases}
        $$
        \item For $k=3$, Frankl \cite{frankl2017maximum} proved that 
        $$m(3t-a,3,t-a-1)=\max\left\{\binom{3t-3a-1}{3},\binom{3t-a}{3}-\binom{2t+1}{3}\right\}.$$
        Therefore, by Theorem~\ref{thm:main-union-free},
        $$\lim_{n\to\infty}\frac{U_{t+1}(n,3t-a)}{n^3}=
        \begin{cases}
        \displaystyle
        \frac{1}{6}\cdot\frac{1}{\binom{2t+1}{3}}, & \text{if } -26a^2+46at-25a-8t^2+19t-6\ge0,\\[1.5ex]
        \displaystyle
        \frac{1}{6}\cdot\frac{1}{\binom{3t-a}{3}-\binom{3t-3a-1}{3}}, & \text{if } -26a^2+46at-25a-8t^2+19t-6<0.
        \end{cases}
        $$
        \item For $a=0$, Kleitman \cite{kleitman1968maximal} proved $m(tk,k,t-1)=\binom{tk-1}{k}$. Therefore,
        $$\lim_{n\to\infty}\frac{U_{t+1}(n,tk)}{n^k}=
        %\frac{1}{k!}\cdot\frac{1}{\binom{tk}{k}-\binom{tk-1}{k}}=
        \frac{1}{k!}\cdot\frac{1}{\binom{tk-1}{k-1}}.$$
        \item When $tk-a\ge (2(t-a-1)+1)k-(t-a-1)$, equivalently, $\frac{t-1}{2}\le a \le t-1$, Frankl \cite{frankl2013improved} proved $m(tk-a,k,t-a-1)=\binom{tk-a}{k}-\binom{tk-a-(t-a-1)}{k}$. Therefore,
        $$\lim_{n\to\infty}\frac{U_{t+1}(n,tk-a)}{n^k}=
        %\frac{1}{k!}\cdot\frac{1}{\binom{tk-a}{k}-\left(\binom{tk-a}{k}-\binom{tk-a-(t-a-1)}{k}\right)}=
        \frac{1}{k!}\cdot\frac{1}{\binom{t(k-1)+1}{k}}.$$
    \end{itemize}
\end{corollary}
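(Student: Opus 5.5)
The corollary follows from Theorem~\ref{thm:main-union-free} by substituting known exact values of $m(tk-a,k,t-a-1)$ and simplifying. No new combinatorics is needed; the work is checking that the hypotheses of each cited matching result hold and working out the case conditions.

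\textbf{Case $k=2$.} Here $r=2t-a$ and $\lambda=t-a-1$, so $r\ge(\lambda+1)k=2t-2a$ holds. The Erd\H{o}s--Gallai theorem \cite{erdos-gallai1959} gives
$$m=\max\left\{\binom{2t-2a-1}{2},\ \binom{2t-a}{2}-\binom{t+1}{2}\right\},$$
where we used $r-\lambda=t+1$. The denominator $\binom{2t-a}{2}-m$ is then the minimum of $\binom{t+1}{2}$ and $\binom{2t-a}{2}-\binom{2t-2a-1}{2}$. The latter equals $(a+1)(4t-3a-2)/2$. Comparing with $t(t+1)/2$ reduces to the quadratic inequality $t^2+t\le(a+1)(4t-3a-2)$. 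This rearranges to $(t-3a-2)(t-a-1)\le0$. Since $t\ge a+1$, it is equivalent to $t\le 3a+2$, which gives the stated split.

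\textbf{Case $k=3$.} Frankl's theorem \cite{frankl2017maximum} for $k=3$ gives $m$ as the maximum of $\binom{3t-3a-1}{3}$ and $\binom{3t-a}{3}-\binom{2t+1}{3}$, using $r-\lambda=2t+1$. The denominator is then the minimum of $\binom{2t+1}{3}$ and $\binom{3t-a}{3}-\binom{3t-3a-1}{3}$. The first is the minimum exactly when $\binom{3t-a}{3}-\binom{3t-3a-1}{3}-\binom{2t+1}{3}\ge0$. Expanding these cubics in $t$ and $a$ and factoring out the positive factor $(t-a-1)$ should leave the quadratic condition
$$-26a^2+46at-25a-8t^2+19t-6\ge0.$$
I expect this factoring to be the only real computation. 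The factor $(t-a-1)$ must appear because both sides agree when $\lambda=0$. When $t=a+1$ both expressions coincide, so there the case choice is immaterial.

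\textbf{Case $a=0$.} Kleitman's theorem \cite{kleitman1968maximal} gives $m(tk,k,t-1)=\binom{tk-1}{k}$. Pascal's identity then yields $\binom{tk}{k}-\binom{tk-1}{k}=\binom{tk-1}{k-1}$.

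\textbf{Case $\frac{t-1}{2}\le a\le t-1$.} Frankl's bound \cite{frankl2013improved} requires $r\ge(2\lambda+1)k-\lambda$. With $r=tk-a$ and $\lambda=t-a-1$ this reads $tk-a\ge(2t-2a-1)k-t+a+1$, i.e. $(k-1)(2a+1-t)\ge0$, i.e. $a\ge\frac{t-1}{2}$. The theorem then gives $\binom{tk-a}{k}-m=\binom{tk-a-\lambda}{k}=\binom{t(k-1)+1}{k}$.

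In all four cases the excluded parameter pairs are inherited from Theorem~\ref{thm:main-union-free}, so the corollary follows.
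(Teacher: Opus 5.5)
Your proposal is correct and follows the paper, which likewise obtains the corollary simply by substituting the cited exact values of $m(tk-a,k,t-a-1)$ into Theorem~\ref{thm:main-union-free}. The one computation you left unchecked does go through, exactly as you predicted: $6\bigl[\binom{3t-a}{3}-\binom{3t-3a-1}{3}-\binom{2t+1}{3}\bigr]=(t-a-1)\,(-26a^2+46at-25a-8t^2+19t-6)$.
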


\paragraph{Idea of the proof.} The proof of the lower bound in Theorem~\ref{thm:main-union-free} is based on the method of hypergraph packings (see Section~\ref{sec:definitions-lemmas} below for a formal definition), which is a powerful technique for establishing probabilistic lower bounds. To the best of our knowledge, this line of research began with Frankl and R\"odl \cite{Frankl-Rodl-matching}, who proved the existence of {\it near-optimal} $\mathcal{J}$-packings for any fixed hypergraph $\mathcal{J}$, generalizing R\"odl's work on the existence of approximate Steiner systems \cite{Rodl-nibble}. It was later extended by Frankl and F\"uredi \cite{Frankl-Furedi-colored-packing}, who obtained {\it near-optimal induced} $\mathcal{J}$-packings and used them to establish a sharp asymptotic lower bound for cover-free hypergraphs (see \eqref{eq:cff-frankl-furedi}). Based on a powerful result proved independently by Delcourt and Postle \cite{Delcourtconflict} and by Glock, Joos, Kim, K\"uhn, and Lichev \cite{Glockconflict} (see Lemma \ref{lem:useful-lem} below), we strengthen \cite{Frankl-Furedi-colored-packing} by proving the existence of {\it near-optimal locally sparse induced} $\mathcal{J}$-packings (see Lemma~\ref{lem:main-lemma}) and applying it to derive a sharp asymptotic lower bound for union-free hypergraphs. Similar locally sparse packing results have recently been used by several authors to attack an extremal hypergraph problem of Brown-Erd\H{o}s-S\'os (see \cite{bes,Glock64,Glock-Kim-Lichev-Pikhurko-Sun-bes,Letzter-Sgueglia-23,pikhurko-sun} for more details).

\paragraph{Organization.} The paper is organized as follows. In Section~\ref{sec:definitions-lemmas}, we present the necessary definitions and a brief introduction to the history of hypergraph packings and matchings. We also state a useful lemma (Lemma~\ref{lem:useful-lem}) from \cite{Delcourtconflict,Glockconflict} on {\it conflict-free} hypergraph matchings. In Section~\ref{sec:main-lemma}, we establish our main technical lemma (Lemma~\ref{lem:main-lemma}) on locally sparse induced hypergraph packings, using Lemma~\ref{lem:useful-lem}. The proof of the lower bound in Theorem~\ref{thm:main-union-free} is divided into the case $a=0$ (proved in Section~\ref{sec:a=0}) and the case $1\le a\le t-1$, whose technically simpler proof is deferred to  Appendix~\ref{sec:a>0}. Finally, we mention some open problems in Section~\ref{sec:conclusion}.

\section{Preliminaries}\label{sec:definitions-lemmas}

\noindent Throughout, we assume that $r,t,k,v,e$ are fixed positive integers and $n\rightarrow\infty$. We use $[n]$ to denote the set $\{1,\ldots,n\}$. For a finite set $V$, we use $\binom{V}{r}$ to denote the family of all $r$-subsets of $V$. An $r$-graph $\mathcal{H}$ on the vertex set $V(\mathcal{H})$ is a family of distinct $r$-subsets of $V(\mathcal{H})$, that is, $\mathcal{H}\subseteq\binom{V(\mathcal{H})}{r}$. 

Denote the {\it symmetric difference} of two finite subsets $B,C$ by $B\Delta C:=(B\setminus C)\cup(C\setminus B)$. 

\paragraph{$(v,e)$-free hypergraphs.} An {\it $(v,e)$-configuration} is a family of $e$ distinct edges whose union contains at most $v$ vertices. We say an $r$-graph $\mathcal{H}$ is {\it $(v,e)$-free} if it contains no $(v,e)$-configuration, i.e., for arbitrary $e$ distinct edges $A_1,\ldots,A_e\in\mathcal{H}$, we have $|\cup_{i=1}^e A_i|\ge v+1$. A $(v,e)$-free hypergraph is also called {\it locally sparse}. 

The following lemma guarantees the existence of an $m$-vertex locally sparse $r$-graph with many edges. The large number of edges is needed to obtain the desired lower bound in our construction, while local sparsity rules out small configurations that could otherwise lead to a failure of union-freeness.

\begin{lemma}[see e.g. Proposition 6 in \cite{Shangguan-Tamo-spa-hyp-coding}]\label{lem:e^--free}
    For all fixed integers $r>k\ge 2$ and $e\ge 2$, there exists an $r$-graph $\mathcal{G}\subseteq\binom{[m]}{r}$ with at least $cm^{k+\frac{1}{e-1}}$ edges that is $(\ell r-(\ell-1)k-1,\ell)$-free for all $2\le\ell\le e$, where $c=c(r,k,e)$ is an absolute constant independent of $m$. 
\end{lemma}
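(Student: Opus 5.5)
The plan is the standard deletion (alteration) method. The forbidden configurations all live inside a bounded number of edges, so one random selection followed by deletions suffices.

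Fix $r>k\ge2$ and $e\ge2$. For $2\le \ell\le e$, call a set of $\ell$ distinct $r$-subsets of $[m]$ \emph{bad} if their union has at most $\ell r-(\ell-1)k-1$ vertices. Build $\mathcal{G}_0$ by including each $r$-subset of $[m]$ independently with probability $p=\gamma m^{k-r+\frac{1}{e-1}}$, where $\gamma$ is a small constant chosen below. Then $\mathbb{E}|\mathcal{G}_0|=p\binom{m}{r}=\Theta(\gamma m^{k+\frac{1}{e-1}})$.

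To count bad configurations for fixed $\ell$, sum over the vertex count $v\le \ell r-(\ell-1)k-1$ of the union. There are at most $O(m^{v})$ choices of vertex set, and each contains $O(1)$ families of $\ell$ edges. So the expected number of bad $\ell$-configurations in $\mathcal{G}_0$ is at most
\[
C\, m^{\ell r-(\ell-1)k-1}p^{\ell}=C\gamma^{\ell} m^{\ell r-(\ell-1)k-1+\ell(k-r)+\frac{\ell}{e-1}}=C\gamma^{\ell}m^{k-1+\frac{\ell}{e-1}}.
\]
For $\ell\le e$ we have $\frac{\ell}{e-1}-1\le \frac{1}{e-1}$, with equality exactly when $\ell=e$. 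Hence every term is at most $C\gamma^{\ell}m^{k+\frac{1}{e-1}}$. Since $\ell\ge2$, the total is at most $C'\gamma^2 m^{k+\frac1{e-1}}$.

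Now delete one edge from each bad configuration; this removes every bad configuration. Because $\ell\ge2$, the expected number of surviving edges is at least $c_0\gamma m^{k+\frac1{e-1}}-C'\gamma^2m^{k+\frac1{e-1}}$. Choosing $\gamma$ small makes this at least $\frac{c_0\gamma}{2}m^{k+\frac1{e-1}}$, so some outcome gives the required $\mathcal{G}$ with $c=c_0\gamma/2$, depending only on $r,k,e$.

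The step needing care is the exponent bookkeeping. The worst case is $\ell=e$, and the choice of $p$ is forced by balancing that case against the edge count. We also need $p\le1$, which holds for large $m$ because $k-r+\frac1{e-1}\le -1+1\le 0$; when this exponent equals $0$ (the case $r=k+1$, $e=2$), take $\gamma\le1$. For small $m$ the bound is trivial after shrinking $c$.
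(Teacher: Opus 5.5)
Your proposal is correct and is essentially the argument the paper relies on. The paper cites the lemma from Shangguan--Tamo, and the proof sketched in its source runs the same way: include each $r$-set with probability $p=\Theta(m^{k-r+\frac{1}{e-1}})$, bound the expected number of bad $\ell$-configurations by $O(\gamma^{\ell}m^{k-1+\frac{\ell}{e-1}})$ with $\ell=e$ as the critical case, then delete one edge from each bad configuration. The only quibble is that for $m<r$ no nonempty $\mathcal{G}$ exists, so your ``shrink $c$'' remark (and the lemma itself) should be read for $m\ge r$, where a single edge suffices for bounded $m$; this is harmless for the paper, which applies the lemma only with large $q$.
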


%We briefly give the idea of the proof of Lemma~\ref{lem:e^--free}. Choose every $r$-subset of $[m]$ independently with probability $p=c_0m^{-r+k+\frac{1}{e-1}}$, where $c_0>0$ is a sufficiently small constant. The expected number of selected edges is $\left(\frac{c_0}{r!}+o(1)\right)m^{k+\frac{1}{e-1}}$. For each $2\le \ell\le e$, a straightforward counting argument shows that the expected number of collections of $\ell$ selected edges whose union has size at most $\ell r-(\ell-1)k-1$ is $O\left(m^{\ell r-(\ell-1)k-1}p^\ell\right)=O\left(c_0^\ell m^{k-1+\frac{\ell}{e-1}}\right)$. For $\ell<e$, this expectation is $o\left(m^{k+\frac{1}{e-1}}\right)$. For $\ell=e$, it is $O\left(c_0^e m^{k+\frac{1}{e-1}}\right)$. Thus, by choosing $c_0$ sufficiently small, the expected number of selected edges is larger than the expected total number of forbidden configurations by a positive constant multiple of $m^{k+\frac{1}{e-1}}$. Hence, there exists a realization from which one can delete one edge from every forbidden configuration while retaining at least $cm^{k+\frac{1}{e-1}}$ edges. The resulting hypergraph satisfies the conclusion of Lemma~\ref{lem:e^--free}.

\paragraph{Hypergraph packings and matchings.} Next, we present several definitions of hypergraph packings and matchings. 

\begin{definition}[Packings and induced packings]\label{def:packing}
    For a fixed $k$-graph $\mathcal{J}$, a {\it $\mathcal{J}$-packing} $\mathcal{P}$ in $\binom{[n]}{k}$ is a family of {\it edge-disjoint} copies of $\mathcal{J}$, that is, $\mathcal{P}=\{(V(\mathcal{J}_i),\mathcal{J}_i):i\in [|\mathcal{P}|]\}$, where for each $i$, $\mathcal{J}_i\subseteq\binom{[n]}{k}$ is a copy of $\mathcal{J}$, and for distinct $i,i'$, $\mathcal{J}_i\cap\mathcal{J}_{i'}=\emptyset$. 
    
    The $\mathcal{J}$-packing $\mathcal{P}$ is further called {\it induced} if for all distinct $i,i'$, $|V(\mathcal{J}_i)\cap V(\mathcal{J}_{i'})|\le k$, and if $|V(\mathcal{J}_i)\cap V(\mathcal{J}_{i'})|=k$ then $V(\mathcal{J}_i)\cap V(\mathcal{J}_{i'})$ is neither an edge in $\mathcal{J}_i$ nor an edge in $\mathcal{J}_{i'}$.
\end{definition}

If $\mathcal{J}$ has $m$ vertices and $e\ge2$, we say that a $\mathcal{J}$-packing $\mathcal{P}=\{(V(\mathcal{J}_i),\mathcal{J}_i):i\in[|\mathcal{P}|]\}$ in $\binom{[n]}{k}$ is {\it $e$-locally sparse} if the $m$-graph $\{V(\mathcal{J}_i):i\in[|\mathcal{P}|]\}\subseteq\binom{[n]}{m}$ is $(\ell m-(\ell-1)k-1,\ell)$-free for every $2\le \ell\le e$. 

Since the $k$-graphs in a $\mathcal{J}$-packing are pairwise edge-disjoint, any $\mathcal{J}$-packing in $\binom{[n]}{k}$ can have at most $\frac{\binom{n}{k}}{|\mathcal{J}|}$ copies of $\mathcal{J}$. The influential work of R\"odl \cite{Rodl-nibble}, Frankl and R\"odl \cite{Frankl-Rodl-matching}, and Pippenger (see \cite{Pippenger-Spencer-asymptotic}) showed that this trivial upper bound is essentially tight: there exists a near-optimal $\mathcal{J}$-packing containing at least $(1-o(1))\cdot\frac{\binom{n}{k}}{|\mathcal{J}|}$ edge-disjoint copies of $\mathcal{J}$, where $o(1)\rightarrow0$ as $n\rightarrow\infty$. Frankl and F\"uredi \cite{Frankl-Furedi-colored-packing} strengthened this result by proving the existence of near-optimal induced $\mathcal{J}$-packings. 

Very recently, Delcourt and Postle \cite{Delcourtconflict} and independently Glock, Joos, Kim, K\"uhn, and Lichev \cite{Glockconflict} greatly extended the above line of work by establishing a general framework that shows the existence of near-optimal $\mathcal{J}$-packings {\it avoiding certain small configurations}. Our proof of the lower bound on $U_t(n,r)$ also applies this framework. 

We need some preparations before stating the result of \cite{Delcourtconflict,Glockconflict}. For reals $a,b\in(0,1)$, write $a=1\pm b$ if $a\in[1-b,1+b]$. For an integer $r\ge 1$, a hypergraph is {\it $r$-bounded} if every edge
has size at most $r$. For a hypergraph $\mathcal{C}$ (not necessarily uniform) and an integer $\ell\ge1$, let $\mathcal{C}^{(\ell)}$ denote the subhypergraph of $\mathcal{C}$ formed by the $\ell$-edges in $\mathcal{C}$, i.e., $V(\mathcal{C}^{(\ell)})=V(\mathcal{C})$ and $\mathcal{C}^{(\ell)}=\{C\in\mathcal{C}:|C|=\ell\}$. For a subset $X\subseteq V(\mathcal{C})$, let $\deg_{\mathcal{C}}(X)$ denote the number of edges in $\mathcal{C}$ that contain $X$, i.e., $\deg_{\mathcal{C}}(X)=|\{C\in \mathcal{C}:X\subseteq C\}|$. If $X=\{v\}$ is a single vertex, then we write $\deg_{\mathcal{C}}(v)=\deg_{\mathcal{C}}(\{v\})$. Let $\Delta_{\ell}(\mathcal{C})$ be the maximum of $\deg_{\mathcal{C}}(X)$ over all $X\in\binom{V(\mathcal{C})}{\ell}$. 

Given a hypergraph $\mathcal{H}$, a {\it matching} in $\mathcal{H}$ is a set of pairwise disjoint edges of $\mathcal{H}$. Let $\mathcal{C}$ be a hypergraph defined on the {\it edges} of $\mathcal{H}$. A matching $\mathcal{M}\subseteq\mathcal{H}$ is said to be $\mathcal{C}$-free if $\mathcal{M}$ does not contain $\mathcal{C}$ as a subhypergraph. Such a matching is called a {\it conflict-free} hypergraph matching (\cite{Delcourtconflict,Glockconflict}).

The lemma below gives sufficient conditions for which a hypergraph $\mathcal{H}$ has a near-optimal $\mathcal{C}$-free matching. In our application, the matching gives the desired packing, while the forbidden conflicts $\mathcal{C}$ encode the local sparsity conditions. 

\begin{lemma}[see Corollary 1.17 in \cite{Delcourtconflict} and Theorem 1.3 in \cite{Glockconflict}]\label{lem:useful-lem}
    For all integers $r,e\ge 2$ and every real $\ep>0$, there exist an integer
    $D_0\ge 0$ and a real $\alpha>0$ such that the following holds for all
    $D\ge D_0$. Let $\mathcal{H}$ be an $r$-bounded multi-hypergraph satisfying $\Delta(\mathcal{H})\le D$ and $\Delta_2(\mathcal{H})\le D^{1-\ep}$. 

    Let $\mathcal{C}$ be a hypergraph with $V(\mathcal{C})=\mathcal{H}$ such that every $C\in\mathcal{C}$ satisfies $2\le|C|\le e$, and the following conditions hold:
    \begin{itemize}
        \item [(i)] $\Delta_1(\mathcal{C}^{(\ell)})\le \alpha D^{\ell -1}\log D$ for all $2\le \ell\le e$;
        \item [(ii)] $\Delta_{\ell'}(\mathcal{C}^{(\ell)})\le D^{\ell-\ell'-\epsilon}$ for all $2\le \ell'<\ell\le e$;
        \item [(iii)] $|\{H_1\in\mathcal{H}:\{H_1,H_2\}\in\mathcal{C} \mbox{ and } v\in H_1\}|\le D^{1-\ep}$ for all $H_2\in \mathcal{H}$ and $v\in V(\mathcal{H})$;
        \item [(iv)] $|\{H_1\in\mathcal{H}:\{H_1,H_2\},\{H_1,H_3\}\in\mathcal{C}\}|\le D^{1-\ep}$ for all disjoint $H_2,H_3\in \mathcal{H}$.
    \end{itemize}
    Then, there exists a $\mathcal{C}$-free matching of $\mathcal{H}$ with size at least $\frac{|\mathcal{H}|}{D}\bigl(1-D^{-\alpha}\bigr)$.
\end{lemma}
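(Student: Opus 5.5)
This lemma is imported from \cite{Delcourtconflict,Glockconflict}, so the plan is to show that its hypotheses are a special case of the theorems proved there, rather than to reprove it. As a check, I would also recall the mechanism behind those theorems.

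\textbf{Reduction to Theorem 1.3 of \cite{Glockconflict}.} I would first line up the parameters of Lemma~\ref{lem:useful-lem} with the statement of that theorem. There, $\mathcal{H}$ is a $k$-bounded hypergraph (our $r$) that is almost regular, or at least has maximum degree $D$ and small codegree $\Delta_2(\mathcal{H})\le D^{1-\ep}$. The conflict system $\mathcal{C}$ is $(d,\ell,\ep)$-bounded, meaning two things.
\begin{itemize}
\item The degree conditions $\Delta_1(\mathcal{C}^{(j)})\le \ell D^{j-1}$ and codegree conditions $\Delta_{j'}(\mathcal{C}^{(j)})\le D^{j-j'-\ep}$ hold; these are our (i) and (ii). The $\log D$ factor allowed in (i) corresponds to the ``$\alpha D^{\ell-1}\log D$'' variant, which is exactly the form in Corollary 1.17 of \cite{Delcourtconflict}.
\item Size-$2$ conflicts need extra care, and this is handled by our conditions (iii) and (iv). Condition (iii) bounds the number of edges $H_1$ through a fixed vertex that conflict with a fixed $H_2$. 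Condition (iv) bounds the common conflict-neighbourhood of two disjoint edges.
\end{itemize}
Both quantities are bounded by $D^{1-\ep}$ in their formulations, so here I only need to match notation.

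\textbf{Two technical points.} First, the lemma allows $\mathcal{H}$ to be a multi-hypergraph. I would argue that parallel edges are controlled by $\Delta_2(\mathcal{H})\le D^{1-\ep}$, since a multi-edge counts toward codegree. So the proofs of \cite{Delcourtconflict,Glockconflict}, which only use degree and codegree counts, go through unchanged. Second, neither theorem requires regularity of $\mathcal{H}$ for the weaker conclusion $|\mathcal{M}|\ge \frac{|\mathcal{H}|}{D}(1-D^{-\alpha})$. If needed, I would regularize by the standard trick of embedding $\mathcal{H}$ into a $D$-regular-ish host with dummy vertices and edges. Such edges are disjoint copies that carry no conflicts, so conditions (i)--(iv) are preserved, and the matching restricted to the original edges loses only a negligible fraction.

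\textbf{Underlying mechanism.} For completeness I would recall why the theorem holds. One runs the random greedy conflict-free matching process, or a R\"odl nibble. At each step a uniformly random available edge is added, and every edge that intersects it or would complete a conflict is deleted. Using the differential equations method with the codegree bounds, one tracks three quantities until only a $D^{-\alpha}$ fraction of vertices remains:
\begin{itemize}
\item vertex degrees in the remaining hypergraph, which stay close to the trajectory $D(1-s)^{r-1}$ up to the correction needed for conflicts;
\item the degrees in the ``partially completed'' conflict hypergraphs $\mathcal{C}^{(j)}$;
\item the number of edges killed by conflicts.
\end{itemize}
The main obstacle is showing that size-$2$ conflicts do not destroy the concentration of degrees. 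Condition (iv) is exactly what bounds the correlations between removal events here. I would therefore simply invoke \cite{Glockconflict,Delcourtconflict} for this step rather than redo the martingale analysis.
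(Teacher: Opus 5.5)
Your approach matches the paper's: Lemma~\ref{lem:useful-lem} is not proved there but quoted. The statement as written has an $r$-bounded multi-hypergraph with only $\Delta(\mathcal{H})\le D$ assumed, the $\alpha D^{\ell-1}\log D$ bound in (i), size-$2$ conflicts controlled through (iii) and (iv), and the conclusion $\frac{|\mathcal{H}|}{D}(1-D^{-\alpha})$. This is the form of Corollary 1.17 of \cite{Delcourtconflict}, so citing that is the entire proof. That corollary is stated for multi-hypergraphs, so you need no separate argument about parallel edges.

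The parts of your plan that present the lemma as a special case of Theorem 1.3 of \cite{Glockconflict} are inaccurate and should be dropped.
\begin{itemize}
\item That theorem allows only $\Delta(\mathcal{C}^{(j)})\le \ell d^{j-1}$, with no $\log$ factor.
\item It assumes near-regularity, $\delta(\mathcal{H})\ge(1-d^{-\epsilon})d$, and concludes with an almost perfect matching. So your claim that neither theorem needs regularity is false for it, and for that source it is not ``only matching notation''.
\end{itemize}

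Your padding fix does not repair this as stated. To raise degrees you must add dummy edges through the deficient original vertices, so these edges are not disjoint from $\mathcal{H}$. An almost perfect matching of the padded host may then cover those vertices by dummy edges. Nothing forces $\frac{|\mathcal{H}|}{D}(1-D^{-\alpha})$ original edges into the matching. Within \cite{Glockconflict} you would additionally have to track $\mathcal{H}$ with a $1$-uniform test function, and the $\log D$ discrepancy in (i) would still remain.
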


%To establish the lower bounds on $U_t(n,r)$, our argument relies on an induced hypergraph packing avoiding small $(v,e)$-configurations. In the next section, we prove the existence of such a packing via Lemma~\ref{lem:useful-lem}.

\section{Locally sparse induced hypergraph packings}\label{sec:main-lemma}

\noindent In this section, we present our main technical lemma used in the proofs of Theorem \ref{thm:main-union-free}. It shows that for any fixed $k$-graph $\mathcal{J}$ with at least two edges, there exists a near-optimal induced $\mathcal{J}$-packing that avoids small $(v,e)$-configurations. 

\begin{lemma}\label{lem:main-lemma}
    Let $m> k,~e\ge 2$ be fixed integers and $\mathcal{J}$ be a fixed $k$-graph on $m$ vertices with at least $2$ edges. Then for every $\epsilon>0$ there exists an integer $n_0$ such that the following holds for all $n\ge n_0$. There exists a $\mathcal{J}$-packing $\mathcal{P}=\{(V(\mathcal{J}_i),\mathcal{J}_i):i\in[|\mathcal{P}|]\}$ in $\binom{[n]}{k}$ such that the following conditions hold:  
    \begin{itemize}
        \item [(i)] $\mathcal{P}$ is induced; 
        \item [(ii)] $\mathcal{P}$ is $e$-locally sparse; 
        \item [(iii)] $\mathcal{P}$ is near-optimal, i.e., $|\mathcal{P}|\ge(1-\epsilon)\binom{n}{k}/|\mathcal{J}|$.
        %, where $o(1)\rightarrow 0$ as $n\rightarrow\infty$.
    \end{itemize}
\end{lemma}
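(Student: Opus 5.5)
We plan to derive Lemma~\ref{lem:main-lemma} from Lemma~\ref{lem:useful-lem}. The auxiliary hypergraph $\mathcal{H}$ has vertex set $\binom{[n]}{k}$. Its edges are the edge sets $\mathcal{J}'\subseteq\binom{[n]}{k}$ of all copies $\mathcal{J}'$ of $\mathcal{J}$ on $m$-subsets of $[n]$, and we remember the vertex set $V(\mathcal{J}')$ of each copy (possibly as a multi-hypergraph). Then $\mathcal{H}$ is $|\mathcal{J}|$-bounded, in fact $|\mathcal{J}|$-uniform. A matching in $\mathcal{H}$ is exactly a $\mathcal{J}$-packing.

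Next we record the degrees. By symmetry every $k$-set lies in the same number $D=\Theta(n^{m-k})$ of copies; precisely, $D=|\mathcal{J}|\cdot(\text{number of copies})/\binom{n}{k}$. Two distinct $k$-sets span at least $k+1$ vertices, so $\Delta_2(\mathcal{H})=O(n^{m-k-1})\le D^{1-\epsilon'}$ for a small $\epsilon'>0$. Consequently $|\mathcal{H}|/D=\binom{n}{k}/|\mathcal{J}|$, and a matching of size $\frac{|\mathcal{H}|}{D}(1-D^{-\alpha})$ gives (iii).

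The conflict hypergraph $\mathcal{C}$ has two types of conflicts.
\begin{itemize}
\item[(a)] \emph{Induced conflicts.} These are pairs $\{\mathcal{J}_1,\mathcal{J}_2\}$ that violate inducedness. Either $|V_1\cap V_2|\ge k+1$, or $|V_1\cap V_2|=k$ and the common $k$-set is an edge of one of them. If both contain that $k$-set as an edge, they share an $\mathcal{H}$-vertex and a matching already excludes the pair. So we only need the pairs where the shared $k$-set is an edge of exactly one of the two copies.
\item[(b)] \emph{Sparsity conflicts.} For $2\le\ell\le e$, these are $\ell$-sets of copies whose vertex sets have union of size at most $\ell m-(\ell-1)k-1$. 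We can restrict to minimal such configurations, or simply to those that are pairwise compatible with (a).
\end{itemize}
A $\mathcal{C}$-free matching then satisfies (i) and (ii).

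It remains to verify conditions (i)--(iv) of Lemma~\ref{lem:useful-lem}. Throughout we use the principle that a configuration of $\ell$ copies spanning $u$ vertices with one copy fixed has $O(n^{u-m})$ completions.
\begin{itemize}
\item For type (a) pairs with $|V_1\cap V_2|\ge k+1$, fixing one copy leaves $O(n^{m-k-1})=o(D)$ choices for the other. This is well within $\alpha D\log D$, and it is also at most $D^{1-\epsilon}$, which handles (iii) and (iv) for these pairs.
\item For type (a) pairs with $|V_1\cap V_2|=k$, where the shared set is an edge of exactly one copy, there are $\Theta(n^{m-k})=\Theta(D)$ partners. This satisfies (i) but is too many for (iii) and (iv). For (iii), however, the condition $v\in H_1$ (a specified $k$-set is an edge of $\mathcal{J}_1$) forces one more shared vertex unless that $k$-set is the shared one. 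Hence the count is $O(n^{m-k-1})$ plus the contribution from the single case where it is the shared set. I expect this last case, together with (iv), to be the main obstacle.
\item For (iv), two disjoint copies $H_2,H_3$ both conflicting with $H_1$ force $V_1$ to meet $V_2$ and $V_3$ in $k$-sets. If these $k$-sets differ, $H_1$ has at most $O(n^{m-k-1})$ choices. If they coincide, we need the fact that this $k$-set is an edge of $H_2$ or of $H_1$, and so on.
\end{itemize}

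If type (a) conflicts with exactly $k$ common vertices cannot be controlled this way, my fallback is to change how inducedness is enforced. Since these conflicts concern a $k$-set that is an edge of one copy and a non-edge of the other, I would enlarge $\mathcal{H}$ so that each copy also occupies its non-edge $k$-sets $\binom{V}{k}\setminus\mathcal{J}'$ in a second layer of vertices. The idea, in the spirit of Frankl--F\"uredi's colored packing, is that edge $k$-sets and non-edge $k$-sets become distinct vertices, and a conflict becomes the occupation of a $k$-set in both roles. Alternatively, one could simply delete the few bad copies afterwards by a counting argument.

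For type (b), a fixed copy has $O(n^{\ell m-(\ell-1)k-1-m})=O(D^{\ell-1}n^{-1})$ partners in $\ell$-configurations, which gives (i). For (ii), fixing $\ell'$ copies that already span at least $\ell' m-(\ell'-1)k$ vertices (assuming no smaller sub-configuration, which we may arrange by minimality) leaves $O(n^{(\ell-\ell')(m-k)-1})=D^{\ell-\ell'-\epsilon}$ completions.
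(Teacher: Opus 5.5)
There is a genuine gap, and it sits where you suspected: inducedness for two copies meeting in exactly $k$ vertices cannot be encoded as $2$-conflicts for Lemma~\ref{lem:useful-lem}.

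\textbf{Why the direct encoding fails.} Suppose $\mathcal{J}\neq\binom{[m]}{k}$. Fix a copy $H_2$ and a non-edge $T\in\binom{V(H_2)}{k}\setminus H_2$. Every copy $H_1$ with $T\in H_1$ and $V(H_1)\cap V(H_2)=T$ is a type (a) conflict partner of $H_2$ and contains the $\mathcal{H}$-vertex $v=T$. There are $\Theta(n^{m-k})=\Theta(D)$ such $H_1$, so condition (iii) fails by a factor $D^{\epsilon}$. Now take two edge-disjoint copies $H_2,H_3$ with $V(H_2)\cap V(H_3)=T$ a common non-edge. The same $\Theta(D)$ copies $H_1$ conflict with both, so (iv) fails too. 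These conflicts are intrinsically concentrated on the single vertex $T$, so no finer counting will rescue them.

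\textbf{Why the fallbacks do not work.} Deleting bad copies afterwards has no counting basis. In an induced packing, every non-edge $k$-set of every copy must be a $k$-set that is not an edge of any copy. In a near-optimal packing such $k$-sets form at most an $\epsilon$-fraction of $\binom{[n]}{k}$, while there are $\Theta(\binom{n}{k})$ (copy, non-edge) incidences. Nothing in the matching lemma steers non-edges into this small set, and for a typical near-perfect matching one expects almost every copy to be bad.

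The two-layer variant, as you describe it, is worse. If each copy occupies its non-edge $k$-sets as matching vertices and no $k$-set may be used in both layers, then every $k$-set is used at most once overall. This caps $|\mathcal{P}|$ at $\binom{n}{k}/\binom{m}{k}$, a constant factor below $\binom{n}{k}/|\mathcal{J}|$. The cross-layer conflicts are also again concentrated $2$-conflicts.

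\textbf{The missing idea.} The paper \emph{pre-assigns} the roles at random:
\begin{itemize}
\item color each $k$-set red independently with probability $\gamma$;
\item let only the blue $k$-sets be vertices of $\mathcal{H}$;
\item keep only copies whose edges are all blue and whose non-edges are all red.
\end{itemize}
Red $k$-sets are not occupied, so many copies can share one as a non-edge; this is why only a $\gamma$-fraction of $k$-sets is sacrificed. Also, a $k$-set common to two copies can no longer be an edge of one and a non-edge of the other. Inducedness then follows from the $\ell=2$ sparsity conflicts alone. Those have degree $O(n^{m-k-1})\le D^{1-\gamma}$, which makes (iii) and (iv) trivial.

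The price is that $\mathcal{H}$ is only approximately regular. One therefore needs a concentration argument for the degrees (Claim~\ref{claim-concentration}, via bounded differences), and a choice $\gamma<\epsilon/8$ to absorb the loss of red $k$-sets.

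Your treatment of the sparsity conflicts through minimal configurations matches the paper's. Note, though, that restricting to configurations that are merely pairwise compatible with (a) would not give condition (ii) for $\ell'\ge 3$; minimality is what is needed.
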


Lemma~\ref{lem:main-lemma} strengthens several classical packing results by adding the local sparsity condition. Specifically, the works of R\"odl \cite{Rodl-nibble}, Frankl and R\"odl \cite{Frankl-Rodl-matching}, and Pippenger \cite{Pippenger-Spencer-asymptotic} establish the existence of near-optimal packings, as required in condition (iii). Frankl and F\"uredi \cite{Frankl-Furedi-colored-packing} further showed that such a packing can be chosen to be induced, thus giving conditions (i) and (iii). Lemma~\ref{lem:main-lemma} adds the local sparsity condition (ii), and hence contains these results as special cases.

Below, we present the proof of Lemma~\ref{lem:main-lemma}, which applies Lemma~\ref{lem:useful-lem}. 

\subsection{Proof of Lemma~\ref{lem:main-lemma}}
     \noindent Throughout the proof, we assume that $n$ is sufficiently large. Choose a constant $\gamma\in(0,\min\{\frac{\ep}{8},\frac{1}{2(m-k)}\})$.
     %$$0<\gamma<\min\{\frac{\ep}{8},\frac{1}{2(m-k)}\}.$$ 
     Color every edge of the complete $k$-graph $\binom{[n]}{k}$ red with probability $\gamma$ and blue with probability $1-\gamma$. 
     All colorings are independent. Let $\mathcal{H}$ be the labelled $|\mathcal{J}|$-uniform multi-hypergraph whose vertices are the blue edges in $\binom{[n]}{k}$. The edges of $\mathcal{H}$ are the labelled copies $\mathcal{J}'$ of $\mathcal{J}$ with $V(\mathcal{J}')\subseteq[n]$ such that every edge of $\mathcal{J}'$ is colored blue and every non-edge of $\binom{V(\mathcal{J}')}{k}\setminus\mathcal{J}'$ is colored red. Here and throughout the proof, different labelled copies are regarded as distinct parallel edges of $\mathcal H$, even if they have the same set of vertices in $\mathcal H$. All degrees in $\mathcal H$ are counted with multiplicity. %Multiple edges are allowed and counted as distinct labelled occurrences. %Equivalently, one may replace each labelled occurrence by a distinct formal edge; all degrees below count these labelled occurrences. Thus isolated vertices of $\mathcal{J}$ cause no loss in the degree counts. In Claim~\ref{claim:matching-packing} below, we show that this coloring ensures that matchings in $\mathcal{H}$ correspond to induced $\mathcal{J}$-packings as in Lemma~\ref{lem:main-lemma} (i).

     To establish Lemma~\ref{lem:main-lemma} (ii), we define a conflict hypergraph $\mathcal{C}$ whose vertices are the labelled edges of $\mathcal{H}$. The edges of $\mathcal{C}$, which we call conflicts, are the sets of labelled copies of $\mathcal{J}$ in $\mathcal{H}$ whose vertex sets form a {\it minimal} bad configuration; that is, their vertex sets form an $(\ell m-(\ell-1)k-1,\ell)$-configuration for some $2\le \ell\le e$ but contain no $(\ell' m-(\ell'-1)k-1,\ell')$-configuration for any $1\le \ell'< \ell$. 
     %The threshold $\ell m-(\ell-1)k-1$ is chosen so that a family of selected copies fails to be $e$-locally sparse precisely when some $\ell$ of their vertex sets form such a bad configuration. 
     More precisely,  
     \begin{equation}\label{eq:def-C}
         \begin{aligned}
            \mathcal{C}:=\bigcup_{\ell=2}^{e}\Bigg\{\{\mathcal{J}_1,\ldots,\mathcal{J}_{\ell}\} \in\binom{\mathcal{H}}{\ell}:
            &\left|\bigcup_{i=1}^{\ell}V(\mathcal{J}_i)\right|\le\ell m-(\ell-1)k-1,\\
            &\left|\bigcup_{i\in S}V(\mathcal{J}_i)\right|\ge|S|m-(|S|-1)k \mbox{ for all }\emptyset \not=S\subsetneq [\ell]\Bigg\}.
        \end{aligned}
     \end{equation}

    The following claim connects matchings in $\mathcal{H}$ to $\mathcal{J}$-packings in $\binom{[n]}{k}$.
 
    \begin{claim}\label{claim:matching-packing}
        Every $\mathcal{C}$-free matching in $\mathcal{H}$ yields a $\mathcal{J}$-packing in $\binom{[n]}{k}$ that satisfies Lemma~\ref{lem:main-lemma} (i), (ii). 
    \end{claim}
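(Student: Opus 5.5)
Let $\mathcal{M}=\{\mathcal{J}_1,\ldots,\mathcal{J}_p\}$ be a $\mathcal{C}$-free matching in $\mathcal{H}$, and take $\mathcal{P}=\{(V(\mathcal{J}_i),\mathcal{J}_i):i\in[p]\}$. Because $\mathcal{M}$ is a matching, distinct $\mathcal{J}_i$ share no vertex of $\mathcal{H}$, i.e.\ no blue $k$-set, and every edge of every $\mathcal{J}_i$ is blue. Hence the $\mathcal{J}_i$ are pairwise edge-disjoint and $\mathcal{P}$ is a $\mathcal{J}$-packing. Parallel labelled copies on the same vertex set and with the same edge set are vertices of $\mathcal{H}$ sharing all their $\mathcal{H}$-vertices. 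Since $\mathcal{J}$ has at least one edge, such copies cannot both lie in a matching. So the copies are genuinely distinct.

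For (ii), suppose some $\ell$ of the vertex sets with $2\le\ell\le e$ form an $(\ell m-(\ell-1)k-1,\ell)$-configuration. Among all such bad subfamilies of size between $2$ and $e$, choose one, $\{\mathcal{J}_{i_1},\ldots,\mathcal{J}_{i_\ell}\}$, with $\ell$ minimal. No proper nonempty subfamily $S$ with $|S|\ge 2$ is bad, by minimality. For $|S|=1$ the condition $|V(\mathcal{J}_i)|\ge m$ holds trivially. So every proper nonempty $S$ satisfies $|\bigcup_{i\in S}V(\mathcal{J}_i)|\ge |S|m-(|S|-1)k$, and the subfamily is an edge of $\mathcal{C}$ as defined in \eqref{eq:def-C}. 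This contradicts $\mathcal{C}$-freeness. One point needs care: the $m$-graph $\{V(\mathcal{J}_i)\}$ must consist of distinct $m$-sets, so that ``$\ell$ distinct edges'' makes sense. This follows from (i) below, since two copies with equal vertex sets would share $m>k$ vertices.

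For (i), take distinct $i,i'$ and set $I=V(\mathcal{J}_i)\cap V(\mathcal{J}_{i'})$. We need $|I|\le k$. For $\ell=2$, the threshold in \eqref{eq:def-C} is $2m-k-1$, and the subset conditions are trivial. So $|I|\ge k+1$, which is equivalent to $|V(\mathcal{J}_i)\cup V(\mathcal{J}_{i'})|\le 2m-k-1$, would make $\{\mathcal{J}_i,\mathcal{J}_{i'}\}\in\mathcal{C}$, a contradiction. Hence $|I|\le k$.

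If $|I|=k$, suppose $I\in\mathcal{J}_i$. Then $I$ is blue, because all edges of $\mathcal{J}_i$ are blue. Since $I\subseteq V(\mathcal{J}_{i'})$, the set $I$ is either an edge of $\mathcal{J}_{i'}$ or a non-edge. It cannot be an edge of $\mathcal{J}_{i'}$: then $I$ would be a common $\mathcal{H}$-vertex of $\mathcal{J}_i$ and $\mathcal{J}_{i'}$, contradicting the matching property. It cannot be a non-edge: the definition of $\mathcal{H}$ forces non-edges of $\binom{V(\mathcal{J}_{i'})}{k}$ to be red, but $I$ is blue. The case $I\in\mathcal{J}_{i'}$ is symmetric.

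The main subtlety I expect is the minimality bookkeeping in (ii): the $\ell'=1$ term in the definition and the distinctness of vertex sets must be handled, as done above. Everything else is direct from the coloring.
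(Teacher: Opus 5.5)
Your proof is correct and follows essentially the same route as the paper: edge-disjointness comes from the matching property, local sparsity from the fact that a minimal bad subfamily is an edge of $\mathcal{C}$, and inducedness from the $\ell=2$ bound plus the blue/red coloring (you get $|I|\le k$ directly from $2$-element conflicts rather than citing (ii), which amounts to the same thing). One small wording slip: parallel labelled copies are \emph{edges} of $\mathcal{H}$, not vertices.
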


    \begin{proof}
    Let $\mathcal{M}$ be a $\mathcal{C}$-free matching in $\mathcal{H}$. The edges in $\mathcal{M}$ are labelled copies of $\mathcal{J}$. Notice that parallel edges of $\mathcal H$ cannot both belong to $\mathcal M$, since they have the same vertex set in $\mathcal H$. These copies are pairwise edge disjoint as $k$-graphs in $\binom{[n]}{k}$, since they are pairwise vertex disjoint as edges in $\mathcal{H}$. Therefore, the edges in $\mathcal{M}$ yield a $\mathcal{J}$-packing in $\binom{[n]}{k}$. Denote this packing by $\mathcal{P}$. %Since $\mathcal{M}$ is $\mathcal{C}$-free, $\mathcal{P}$ satisfies Lemma~\ref{lem:main-lemma} (ii). 

    We first show that $\mathcal{P}$ is $e$-locally sparse. Assume for contradiction that $\mathcal P$ is not $e$-locally sparse. Then there exists a subfamily $\mathcal{Q}\subseteq \mathcal{P}$ with  $2\le |\mathcal{Q}|\le e$ such that $|\cup_{\mathcal{J}'\in \mathcal{Q}}V(\mathcal{J}')|\le |\mathcal{Q}| m-(|\mathcal{Q}|-1)k-1$. Choose such a subfamily $\mathcal{Q}$ with minimum cardinality. Then every nonempty proper subfamily $\mathcal{Q}'\subsetneq\mathcal{Q}$ satisfies $|\cup_{\mathcal{J}'\in \mathcal{Q}'}V(\mathcal{J}')|\ge |\mathcal{Q'}| m-(|\mathcal{Q'}|-1)k$. Thus this minimal subfamily $\mathcal{Q}\subseteq \mathcal{P}$ is an edge of $\mathcal{C}$ contained in $\mathcal{M}$, contradicting the fact that $\mathcal{M}$ is $\mathcal{C}$-free. Therefore $\mathcal{P}$ is $e$-locally sparse. 
    
    It remains to show that $\mathcal{P}$ is induced. Indeed, let $\mathcal{J}'$ and $\mathcal{J}''$ be two distinct copies of $\mathcal{J}$ in $\mathcal{P}$. As $\mathcal{P}$ satisfies Lemma~\ref{lem:main-lemma} (ii) with $\ell=2$, we have $|V(\mathcal{J}')\cap V(\mathcal{J}'')|\le k$. Next, we show that if $|V(\mathcal{J}')\cap V(\mathcal{J}'')|=k$, then $V(\mathcal{J}')\cap V(\mathcal{J}'')$ is neither an edge in $\mathcal{J}'$ nor an edge in $\mathcal{J}''$. For the sake of contradiction, assume that $V(\mathcal{J}')\cap V(\mathcal{J}'')$ is an edge in $\mathcal{J}'$. Then, by definition, it is colored blue in $\mathcal{J}'$. However, since $\mathcal{J}'$ and $\mathcal{J}''$ are edge disjoint copies of $\mathcal{J}$, $V(\mathcal{J}')\cap V(\mathcal{J}'')$ is a non-edge of $\mathcal{J}''$, and hence has to be colored red in $\mathcal{J}''$, which is a contradiction. By symmetry, $V(\mathcal{J}')\cap V(\mathcal{J}'')$ is not an edge of $\mathcal J''$ either. Therefore, $\mathcal P$ is induced.
    \end{proof}
     
    Given Claim~\ref{claim:matching-packing}, to prove Lemma~\ref{lem:main-lemma}, it suffices to show the existence of a near-optimal $\mathcal{C}$-free matching in $\mathcal{H}$. From the definition of $\mathcal{H}$, $|V(\mathcal{H})|=\sum_{K\in \binom{[n]}{k}}X_{K}$ is a sum of $\binom{n}{k}$ independent indicator random variables, where $X_{K}=1$ if $K$ is colored blue and $X_{K}=0$ if $K$ is colored red. It is clear that $\mathbb{E}[|V(\mathcal{H})|]=(1-\gamma)\binom{n}{k}$, and by a Chernoff bound, with high probability we have $|V(\mathcal{H})|=(1\pm 2\gamma)\binom{n}{k}$. 

    \begin{claim}\label{claim-concentration}
         With high probability, every vertex in $\mathcal{H}$ is contained in $(1\pm d^{-\gamma})d$ edges, where $d=(c+o(1))n^{m-k}$ and $c$ is a constant depending on $\mathcal{J},\gamma,m,k$. 
    \end{claim}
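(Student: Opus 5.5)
Fix a vertex $K\in\binom{[n]}{k}$ of $\mathcal{H}$, i.e.\ a blue $k$-set. I will write its degree as a sum over labelled copies and compute its expectation. A labelled copy $\mathcal{J}'$ containing $K$ as an edge is determined by two choices. First, an edge $J\in\mathcal{J}$ that is mapped onto $K$, together with a bijection $J\to K$. Second, an injection of the remaining $m-k$ vertices of $\mathcal{J}$ into $[n]\setminus K$. The number of such labelled copies is therefore $N=|\mathcal{J}|\,k!\,(n-k)_{m-k}=(|\mathcal{J}|k!+o(1))n^{m-k}$.

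Conditioned on $K$ being blue, such a copy is an edge of $\mathcal{H}$ exactly when its other $|\mathcal{J}|-1$ edges are blue and its $\binom{m}{k}-|\mathcal{J}|$ non-edges are red. This happens with probability $p=(1-\gamma)^{|\mathcal{J}|-1}\gamma^{\binom mk-|\mathcal{J}|}$. Hence $\mathbb{E}[\deg_{\mathcal{H}}(K)]=Np=d$, with $c=|\mathcal{J}|k!\,(1-\gamma)^{|\mathcal{J}|-1}\gamma^{\binom mk-|\mathcal{J}|}$. Note that this count is exact with multiplicity, since parallel labelled copies are counted separately.

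For concentration I will not use Chernoff directly, because the indicators of different copies are dependent: they share $k$-sets other than $K$. Instead I will use a bounded-differences martingale (McDiarmid) over the independent colours of the $k$-sets meeting $[n]$, or more cleanly a Kim–Vu type polynomial concentration. Viewing $\deg(K)$ as a function of the colours, I note the following.
\begin{itemize}
\item Changing the colour of a single $k$-set $K'\neq K$ affects only the copies whose vertex set contains $K\cup K'$.
\item If $|K\cup K'|=k+j$ with $j\ge1$, there are $O(n^{m-k-j})$ such copies.
\end{itemize}
Applying McDiarmid to all $\binom nk$ colour variables is too lossy. So the first step is to reduce: $\deg(K)$ is a polynomial of degree $\binom mk-1$ in the colour indicators, and all its partial-derivative expectations are $O(n^{m-k-1})$. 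These are much smaller than $d\asymp n^{m-k}$. Kim–Vu then gives deviations of order $\sqrt{d\cdot n^{m-k-1}}\,\mathrm{polylog}\,n=O(n^{m-k-1/2}\mathrm{polylog}\,n)$, with failure probability $n^{-\omega(1)}$.

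An alternative, which avoids Kim–Vu, is Janson's inequality or a direct variance-plus-exposure argument. Split the copies according to the set of $m-k$ new vertices. Then expose vertices one at a time, i.e.\ a vertex-exposure martingale on $[n]\setminus K$: adding or removing a vertex changes the count by $O(n^{m-k-1})$. Azuma's inequality then gives a deviation of $O(n^{m-k-1/2}\log n)$, with failure probability $\exp(-\Omega(\log^2 n))$. I would go with this vertex-exposure route, as it is elementary.

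Since $\gamma<\frac{1}{2(m-k)}$, we have $d^{-\gamma}d\asymp n^{(m-k)(1-\gamma)}\ge n^{m-k-1/2}\cdot n^{\Omega(1)}$. So the deviation is at most $d^{1-\gamma}$, and the $o(1)$ in the definition of $d$ is absorbed. The bound on $\gamma$ is exactly what makes this work. Finally, I take a union bound over the $\binom nk$ choices of $K$, which is polynomially many against a superpolynomially small failure probability.

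The main obstacle I expect is the concentration step. I must handle the dependence among copies carefully, either with vertex exposure or Kim–Vu, and check that the Lipschitz constant is $O(n^{m-k-1})$ uniformly, including copies that reuse $K$ via different labellings. After that, the comparison with $d^{1-\gamma}$ is routine.
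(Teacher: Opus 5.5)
Your proof is correct. The route you commit to, vertex exposure, is a mild but genuine variant of the paper's argument. Both are the bounded-differences inequality (Lemma~\ref{lem:BDI}); they differ in how the independent coordinates are chosen.

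\textbf{The paper's route.} The paper applies McDiarmid directly to the individual colour indicators $X_T$, $T\neq K$. It uses non-uniform Lipschitz constants $b_T=O(n^{m-2k+i})$ when $|T\cap K|=i$. Since there are $O(n^{k-i})$ such $T$, this gives
\[
\sum_T b_T^2=\sum_{i=0}^{k-1}O(n^{2m-3k+i})=O(n^{2m-2k-1}),
\]
with the sum dominated by $i=k-1$.

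\textbf{Your ``too lossy'' remark.} Your claim that McDiarmid over all $\binom{n}{k}$ colour variables is too lossy holds only if every $T$ is given the uniform worst-case constant $O(n^{m-k-1})$. The refined bound in your own second bullet, $O(n^{m-k-j})$ when $|K\cup K'|=k+j$, is exactly what makes the paper's version work.

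\textbf{Your route.} You group into one coordinate $Z_i$ the colours of all $k$-sets whose last vertex outside $K$ is $v_i$. These $n-k$ coordinates are independent and take values in finite sets, so Lemma~\ref{lem:BDI} applies verbatim. Re-randomizing $Z_i$ affects only copies whose vertex set contains $K$ and $v_i$. This is the precise form of your ``adding or removing a vertex'', which is better phrased as re-randomizing colours. The result is a uniform constant $c_i=O(n^{m-k-1})$ and the same variance proxy, $\sum_i c_i^2=O(n^{2m-2k-1})$.

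\textbf{Comparison.} Your version needs only one uniform Lipschitz estimate, with no bookkeeping over intersection sizes. The paper's version needs that bookkeeping but works with binary coordinates and no grouping. Either way, taking the deviation to be $d^{1-\gamma}$ gives failure probability $\exp(-\Theta(n^{1-2\gamma(m-k)}))$. The union bound over the $\binom{n}{k}$ choices of $K$ then finishes exactly as you say.

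As an aside, the Kim--Vu and Janson alternatives you mention would not apply off the shelf. Each edge indicator is a product of blue indicators $X_E$ and red indicators $1-X_N$, so the polynomial has mixed signs and the underlying events are not monotone. Since your argument does not rely on either, this does not affect correctness.
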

    \noindent The proof of Claim~\ref{claim-concentration} is postponed to Appendix~\ref{sec:claim}. 

    Write $D:=(1+d^{-\gamma})d=(c+o(1))n^{m-k}$. We apply Lemma~\ref{lem:useful-lem} with $r=|\mathcal J|$ and $\epsilon=\gamma$. %Let $D_0$ and $\alpha>0$ be the constants given by Lemma~\ref{lem:useful-lem}. Since $D\to\infty$, we have $D\ge D_0$ for all sufficiently large $n$. 
    Claim~\ref{claim-concentration} gives $\Delta(\mathcal H)\le D$ with high probability.
    We next bound $\Delta_2(\mathcal H)$. Let $T_1,T_2\in V(\mathcal H)$ be distinct. Then $|T_1\cup T_2|\ge k+1$. An edge of $\mathcal H$ containing both $T_1$ and $T_2$ is a labelled copy of $\mathcal J$ whose ground vertex set contains $T_1\cup T_2$. For each fixed ground vertex set, there are only constantly many such labelled copies, where the constant depends only on $\mathcal J$. This count includes parallel edges. Hence, for some constant $C_0=C_0(\mathcal J)$,
    %We verify the hypotheses of Lemma~\ref{lem:useful-lem} with degree parameter $D$, $\alpha=1$, and  exponent $\gamma$. The hypergraph $\mathcal{H}$ is an $|\mathcal{J}|$-uniform multi-hypergraph, and Claim~\ref{claim-concentration} gives $\Delta(\mathcal{H})\le D$ with high probability. If $T_1,T_2\in V(\mathcal{H})$ are distinct, then $|T_1\cup T_2|\ge k+1$. Once $T_1$ and $T_2$ are fixed, an edge of $\mathcal{H}$ containing both of them uses at most $m-k-1$ further vertices, and there are only constantly many relevant labelled occurrences on a fixed vertex set. Consequently, for some constant $C_0=C_0(\mathcal{J})$, 
    $$\deg _{\mathcal{H}}(\{T_1,T_2\})=|\{\mathcal{J}'\in E(\mathcal{H}):\{T_1,T_2\}\subseteq\mathcal{J}'\}|\le C_0n^{m-k-1}\le D^{1-\gamma},$$
    \noindent where the last inequality holds for all sufficiently large $n$ because $\gamma<\frac{1}{2(m-k)}$. Thus, $\Delta_2(\mathcal H)\le D^{1-\gamma}$.
 
    It remains to verify that $\mathcal{H}$ and $\mathcal{C}$ satisfy Lemma~\ref{lem:useful-lem} (i)-(iv). Fix an edge $\{\mathcal{J}_1,\ldots,\mathcal{J}_{\ell}\}\in \mathcal{C}$ and a set $S\subsetneq [\ell]$ with $1\le |S|=\ell'<\ell$. Then $\cup_{i\in [\ell]\setminus S}V(\mathcal{J}_i)$ contains at most $(\ell-\ell')(m-k)-1$ vertices outside $\cup _{i\in S}V(\mathcal{J}_i)$. Indeed, 
    \begin{equation}\label{eq:obs}
        \begin{aligned}
             \left|\left(\bigcup_{i\in [\ell]\setminus S}V(\mathcal{J}_i)\right)\setminus\left(\bigcup_{j\in S}V(\mathcal{J}_j)\right)\right|&= \left|\left(\bigcup_{i\in [\ell]}V(\mathcal{J}_i)\right)\setminus\left(\bigcup_{j\in S}V(\mathcal{J}_j)\right)\right|\\
         &\le (\ell m-(\ell-1)k-1)-(\ell' m-(\ell'-1)k)\\
         &=(\ell-\ell')(m-k)-1,
    \end{aligned}
    \end{equation}
    
    \noindent where the inequality follows from the definition of $\mathcal{C}$ (see \eqref{eq:def-C}).
    Therefore, there is a constant $C_1=C_1(\mathcal{J},e)$ such that, for fixed $1\le \ell'< \ell$ and every subset $\mathcal{S}\subseteq V(\mathcal{C})=\mathcal{H}$ with size $\ell'$, we have 
     \begin{equation}\label{eq:c1-c4}
         \begin{aligned}
         \deg_{\mathcal{C}^{(\ell)}}(\mathcal{S})=|\{\{\mathcal{J}_1,\ldots,\mathcal{J}_{\ell}\}\in\mathcal{C}^{(\ell)}:\mathcal{S}\subseteq \{\mathcal{J}_1,\ldots,\mathcal{J}_{\ell}\}\}|\le  C_1n^{(\ell-\ell')(m-k)-1},
     \end{aligned}
     \end{equation}
     
     \noindent where the inequality follows from \eqref{eq:obs}. Indeed, given $\mathcal{S}$, there are at most $n^{(\ell-\ell')(m-k)-1}$ choices for the set $(\cup_{\mathcal{J}_i\in \{\mathcal{J}_1,\ldots,\mathcal{J}_{\ell}\}\setminus\mathcal{S}}V(\mathcal{J}_i))\setminus(\cup_{\mathcal{J}_i\in \mathcal{S}}V(\mathcal{J}_i))$, and only constantly many labelled copies on any fixed vertex set. Therefore, setting $\ell'=1$ in \eqref{eq:c1-c4} gives, for sufficiently large $n$, 
     $$\Delta_1(\mathcal{C}^{(\ell)})\le C_1n^{(\ell-1)(m-k)-1}\le \alpha D^{\ell-1}\log D,$$
     which verifies Lemma~\ref{lem:useful-lem} $(i)$. 
     %Since $\alpha>0$ is fixed, it follows that $\Delta_1(\mathcal C^{(\ell)})\le \alpha\frac{D^{\ell-1}}{\log D}$ for every $2\le \ell\le e$ and all sufficiently large $n$. This verifies Lemma~\ref{lem:useful-lem} (i). 
     Similarly, by the choice of $\gamma< 1/(2(m-k))$, for all $2\le\ell'<\ell$, we have
     $$\Delta_{\ell'}(\mathcal C^{(\ell)})\le C_1n^{(\ell-\ell')(m-k)-1}\le {D^{\ell-\ell'-\gamma}},$$ 
     which verifies Lemma~\ref{lem:useful-lem} $(ii)$.
     %$$\Delta_{\ell'}(\mathcal{C}^{(\ell)})\le C_1n^{(\ell-\ell')(m-k)-1}\le  D^{\ell-\ell'-\gamma},$$ which verifies Lemma~\ref{lem:useful-lem} $(ii)$. 
     Lastly, Lemma~\ref{lem:useful-lem} $(iii), (iv)$ hold trivially, since by \eqref{eq:obs} we have $\Delta_1(\mathcal{C}^{(2)})\le C_1n^{m-k-1}\le D^{1-\gamma}$. 

    Having verified that $\mathcal{H}$ and $\mathcal{C}$ satisfy all conditions of Lemma~\ref{lem:useful-lem}, we conclude that for sufficiently large $n$ there exists a $\mathcal{C}$-free matching $\mathcal{M}\subseteq\mathcal{H}$ satisfying $|\mathcal{M}|\ge \frac{|\mathcal{H}|}{D}(1-D^{-\alpha})$. Since $\mathcal{H}$ is $|\mathcal{J}|$-uniform and every vertex has degree at least $(1-d^{-\gamma})d$, we have $$|\mathcal{J}||\mathcal{H}|=\sum_{T\in V(\mathcal{H})}\deg_{\mathcal{H}}(T)\ge (1-d^{-\gamma})d|V(\mathcal{H})|.$$
    Hence, $$|\mathcal{M}|\ge \frac{1-d^{-\gamma}}{1+d^{-\gamma}}(1-D^{-\alpha})\frac{|V(\mathcal{H})|}{|\mathcal{J}|}\ge \frac{1-d^{-\gamma}}{1+d^{-\gamma}}(1-D^{-\alpha})\frac{(1-2\gamma)\binom{n}{k}}{|\mathcal{J}|}\ge (1-\ep)\frac{\binom{n}{k}}{|\mathcal{J}|},$$ where the last inequality holds for all sufficiently large $n$, using $\gamma<\frac{\ep}{8}$ and $d^{-\gamma},D^{-\alpha}=o(1)$, as needed. 

\section{Proof of Theorem~\ref{thm:main-union-free}, $a=0$}\label{sec:a=0}

\noindent In this section we prove the lower bound of $U_{t+1}(n,tk)$ in Theorem~\ref{thm:main-union-free}. Let $t\ge 2$ and $k\ge 2$ be fixed integers. It suffices to show that for every $\epsilon>0$, there exists an integer $n_0$ such that for all $n\ge n_0$,  
\begin{equation}\label{eq:a=0}
    U_{t+1}(n,tk)\ge(1-\epsilon)\cdot \frac{\binom{n}{k}}{\binom{tk-1}{k-1}}.
\end{equation}

\noindent Our proof is based on the following construction. 
Let $\mathcal{F}$ be a fixed $r$-graph. The {\it $k$-shadow} of $\mathcal{F}$, denoted by $\sigma_k(\mathcal{F})$, consists of all $k$-subsets of $V(\mathcal{F})$ that are contained in at least one member of $\mathcal{F}$; in other words, $\sigma_k(\mathcal{F})=\cup_{F\in\mathcal{F}}\binom{F}{k}$.

\begin{construction}\label{construction} 
    Let $\mathcal{F}$ be a fixed $r$-graph and $\sigma_k(\mathcal{F})$ be its $k$-shadow. Let $\mathcal{P}=\{(V(\mathcal{J}_i),\mathcal{J}_i):i\in[|\mathcal{P}|]\}$ be a {\it $\sigma_k(\mathcal{F})$-packing} in $\binom{[n]}{k}$, where the $\mathcal{J}_i$'s are pairwise edge-disjoint copies of $\sigma_k(\mathcal{F})$ with vertex set $V(\mathcal{J}_i)$. Put a copy $\mathcal{F}_i$ of $\mathcal{F}$ on top of each $\mathcal{J}_i$ such that $\sigma_k(\mathcal{F}_i)=\mathcal{J}_i$ and $V(\mathcal{F}_i)=V(\mathcal{J}_i)$, and let 
    \begin{align}\label{eq:H-sparse1}
        \mathcal{H}(\mathcal{F})=\bigcup_{i=1}^{|\mathcal{P}|} \mathcal{F}_i.   
    \end{align}
\noindent Then $\mathcal{H}(\mathcal{F})\subseteq\binom{[n]}{r}$ is a $r$-graph formed by the union of edge-disjoint copies of $\mathcal{F}$. 
\end{construction}

%Since the desired lower bound has order $\binom{n}{k}$, we use a near-optimal $\sigma_k(\mathcal{F})$-packing $\mathcal{P}$ in $\binom{[n]}{k}$ and form $\mathcal{H}(\mathcal{F})$ as above. \textcolor{red}{(This sentence seems to be redundant. What do you think?)}

Our strategy is to show that if $\mathcal{F}$ and $\mathcal{P}$ have some nice properties, then the $r$-graph (say, $r=tk$) $\mathcal{H}(\mathcal{F})$ above satisfies the following two properties: 
\begin{itemize}
    \item [(i)] $\mathcal{H}(\mathcal{F})$ is $(t+1)$-union-free;
    \item [(ii)] the number of edges $|\mathcal{H}(\mathcal{F})|=|\mathcal{P}|\cdot|\mathcal{F}|$ attains the required lower bound \eqref{eq:a=0}. 
\end{itemize}

\noindent The main difficulty lies in establishing the correct properties for $\mathcal{F}$ and $\mathcal{P}$.

The properties of $\mathcal{F}$ are captured by the following lemma.

\begin{lemma}\label{lem:uf-LB-F}
    Given integers $t\ge 2,~k\ge 2$ and a real $\epsilon\in(0,1)$, there exists an integer $m_0$ such that for every $m\ge m_0$ there exists a nonempty $tk$-graph $\mathcal{F}\subseteq\binom{[m]}{tk}$ satisfying the following properties:   
\begin{itemize}
    \item [{\rm (i)}] $\mathcal{F}$ is $(t+1)$-union-free;
    \item [{\rm (ii)}] $\mathcal{F}$ is $(\ell\cdot tk-(\ell-1)k-1,\ell)$-free for all $2\le\ell\le 2t+2$;
    \item [{\rm (iii)}] $\frac{|\mathcal{F}|}{|\sigma_k(\mathcal{F})|}\ge (1-\epsilon/2)\cdot\frac{1}{\binom{tk-1}{k-1}}$.
\end{itemize}
\end{lemma}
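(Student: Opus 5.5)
The plan is to build $\mathcal{F}$ by attaching a private vertex to every edge of a locally sparse $(tk-1)$-graph. The numerology suggests this: $\binom{tk-1}{k-1}$ is exactly the number of $k$-subsets of a $tk$-set that contain a fixed vertex. So if every edge $E\in\mathcal{F}$ has a private vertex $x_E$ lying in no other edge, each edge contributes $\binom{tk-1}{k-1}$ private $k$-sets to the shadow. The remaining shadow sets, those avoiding the private vertices, should be negligible, and I expect the required bound (iii) to follow from this.

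\textbf{Construction.} Fix $e:=2t+2$ and $r':=tk-1$. Since $t,k\ge 2$, we have $r'>k$, so Lemma~\ref{lem:e^--free} applies with parameters $(r',k,e)$. For a parameter $y$, it gives a $(tk-1)$-graph $\mathcal{G}\subseteq\binom{Y}{tk-1}$ with $|Y|=y$ and at least $cy^{k+\frac{1}{e-1}}$ edges. This $\mathcal{G}$ is $(\ell(tk-1)-(\ell-1)k-1,\ell)$-free for all $2\le \ell\le e$. Deleting edges preserves local sparsity, so we may trim $\mathcal{G}$ to any prescribed size $g\le cy^{k+\frac1{e-1}}$. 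Given $m$, choose $y$ and $g=\lfloor c y^{k+\frac1{e-1}}\rfloor$ with $y+g\le m$ and $y\to\infty$ as $m\to\infty$; any unused vertices of $[m]$ are left isolated. Take a set $X$ of $g$ new vertices, disjoint from $Y$, and a bijection $S\mapsto x_S$ from $\mathcal{G}$ to $X$. Set $\mathcal{F}:=\{S\cup\{x_S\}: S\in\mathcal{G}\}\subseteq\binom{[m]}{tk}$.

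\textbf{Verification.} For (i), let $\mathcal{A}\subseteq\mathcal{F}$ be any family of edges. Then $(\bigcup\mathcal{A})\cap X=\{x_S: S\cup\{x_S\}\in\mathcal{A}\}$, so the union determines $\mathcal{A}$. Hence $\mathcal{F}$ is $s$-union-free for every $s$, in particular $(t+1)$-union-free. For (ii), take $\ell$ distinct edges with $2\le \ell\le 2t+2$. Their union consists of $\ell$ distinct private vertices together with the union of $\ell$ edges of $\mathcal{G}$. By the local sparsity of $\mathcal{G}$, the latter has at least $\ell(tk-1)-(\ell-1)k$ vertices. So the total is at least $\ell tk-(\ell-1)k$, which is exactly what $(\ell tk-(\ell-1)k-1,\ell)$-freeness requires.

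For (iii), split $\sigma_k(\mathcal{F})$ into the $k$-sets meeting $X$ and those inside $Y$. A $k$-set meeting $X$ contains exactly one private vertex $x_S$, because edges meet $X$ in a single vertex. It lies inside $S\cup\{x_S\}$, so there are exactly $g\binom{tk-1}{k-1}$ such sets. The $k$-sets inside $Y$ number at most $\binom{y}{k}=O(y^k)$. Therefore
\[
\frac{|\mathcal{F}|}{|\sigma_k(\mathcal{F})|}\ge \frac{g}{g\binom{tk-1}{k-1}+O(y^k)}=\frac{1}{\binom{tk-1}{k-1}}\bigl(1-O(y^{-\frac{1}{e-1}})\bigr),
\]
which exceeds $(1-\epsilon/2)/\binom{tk-1}{k-1}$ once $m$, and hence $y$, is large.

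\textbf{Main obstacle.} The only step needing care is the choice of $m_0$ and the sizing of $y$ against $m$. The lower-order term $y^k$ must be $o(g)$, and this is exactly where the exponent gain $\frac{1}{e-1}>0$ from Lemma~\ref{lem:e^--free} is essential; a $\mathcal{G}$ with only $\Theta(y^k)$ edges would not suffice. Everything else appears routine. I would still double-check that taking $e=2t+2$ is compatible with how Lemma~\ref{lem:e^--free} quantifies its parameters.
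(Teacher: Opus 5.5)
Your proposal is correct and follows essentially the same route as the paper. Both take a locally sparse $(tk-1)$-graph from Lemma~\ref{lem:e^--free} with $e=2t+2$ and attach a private vertex to each edge. Property (i) then comes from the private vertices, (ii) from the local sparsity of $\mathcal{G}$, and (iii) from the shadow bound $g\binom{tk-1}{k-1}+\binom{y}{k}$ with $y^k=o(g)$.

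The only difference is cosmetic. The paper picks the base size $q$ so that exactly $q+N=m$ vertices are used, whereas you leave surplus vertices of $[m]$ isolated. Your choice is equally valid for the lemma as stated.
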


We now briefly explain the intuition behind Lemma \ref{lem:uf-LB-F}. A configuration violating $(t+1)$-union-freeness in $\mathcal{H}(\mathcal{F})$ may consist of edges from a single copy of $\mathcal{F}$ or from different copies. Property (i) excludes the first case, while property (ii), together with the local sparsity of $\mathcal{P}$, excludes the second. Property (iii) controls the number of edges in $\mathcal{H}(\mathcal{F})$. Combining it with a near-optimal $\sigma_k(\mathcal{F})$-packing $\mathcal{P}$ with $|\mathcal{P}|\ge(1-\epsilon/2)\cdot\binom{n}{k}/|\sigma_k(\mathcal{F})|$, we obtain
\begin{equation}\label{eq:lowerbound-a=0}
    |\mathcal{H}(\mathcal{F})|=|\mathcal{P}|\cdot|\mathcal{F}|\ge(1-\epsilon/2)\cdot\binom{n}{k}|\mathcal{F}|/|\sigma_k(\mathcal{F})|\ge(1-\epsilon)\cdot \frac{\binom{n}{k}}{\binom{tk-1}{k-1}},
\end{equation}
Thus condition (iii), together with the near-optimality of $\mathcal{P}$, gives the desired lower bound on $|\mathcal{H}(\mathcal{F})|$.

It remains to verify that the above properties of $\mathcal{F}$ and $\mathcal{P}$ imply that $\mathcal{H}(\mathcal{F})$ is $(t+1)$-union-free. This is accomplished by the following lemma.

\begin{lemma}\label{lem:a=0-H(F)}
    Let $\mathcal{F}\subseteq\binom{[m]}{tk}$ be a $tk$-graph satisfying Lemma~\ref{lem:uf-LB-F} (i), (ii), and let $\mathcal{P}=\{(V(\mathcal{J}_i),\mathcal{J}_i):i\in[|\mathcal{P}|]\}$ be an induced, $(2t+2)$-locally sparse $\sigma_k(\mathcal{F})$-packing. Then the $tk$-graph $\mathcal{H}(\mathcal{F})$ defined in Construction~\ref{construction} is $(t+1)$-union-free. 
\end{lemma}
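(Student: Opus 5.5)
We argue by contradiction. Suppose $\mathcal{A}\neq\mathcal{B}$ are subfamilies of $\mathcal{H}(\mathcal{F})$, each of at most $t+1$ edges, with $\bigcup\mathcal{A}=\bigcup\mathcal{B}=:U$. Replacing $\mathcal{A},\mathcal{B}$ by a counterexample with $|\mathcal{A}|+|\mathcal{B}|$ minimal, we may assume $\mathcal{A}\not\subseteq\mathcal{B}$ and $\mathcal{B}\not\subseteq\mathcal{A}$. (If $\mathcal{A}\subsetneq\mathcal{B}$, the equal unions mean some edge of $\mathcal{B}$ is covered by at most $t$ others, and we handle it in the same way.)

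\textbf{Step 1 (each edge has a home copy).} Every edge of $\mathcal{H}(\mathcal{F})$ lies in a unique $\mathcal{F}_i$. If a $tk$-set lay in two copies, all of its $k$-subsets would lie in both $\mathcal{J}_i$ and $\mathcal{J}_{i'}$, contradicting edge-disjointness. Let $I$ be the set of indices of copies containing an edge of $\mathcal{A}\cup\mathcal{B}$, so $|I|\le 2t+2$. If $|I|=1$, all edges live in one copy of $\mathcal{F}$, and Lemma~\ref{lem:uf-LB-F}(i) gives a contradiction.

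\textbf{Step 2 (intersections across copies).} Let $F\in\mathcal{F}_i$ and $F'\in\mathcal{F}_{i'}$ with $i\neq i'$. Then $F\cap F'\subseteq V(\mathcal{J}_i)\cap V(\mathcal{J}_{i'})$, which has size at most $k$ by the induced property. If it had size exactly $k$, it would be a $k$-subset of $F$, hence an edge of $\mathcal{J}_i=\sigma_k(\mathcal{F}_i)$, which the induced property forbids. So $|F\cap F'|\le k-1$. Within one copy we have instead $|F\cap F'|\le k$ from Lemma~\ref{lem:uf-LB-F}(ii) with $\ell=2$.

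\textbf{Step 3 (tree-like order of copies).} By $(2t+2)$-local sparsity of $\mathcal{P}$ applied to subfamilies of $\{V(\mathcal{J}_i):i\in I\}$, every subfamily of size $s$ has union at least $sm-(s-1)k$. Hence we can order $I$ as $i_1,\dots,i_q$ so that each $V(\mathcal{J}_{i_j})$ meets $\bigcup_{j'<j}V(\mathcal{J}_{i_{j'}})$ in at most $k$ vertices. Take the last copy $i=i_q$. It is a ``leaf'': its vertex set meets the vertex sets of all the other involved copies in a set $W$ with $|W|\le k$. Write $\mathcal{A}_i,\mathcal{B}_i$ for the edges of $\mathcal{A},\mathcal{B}$ in $\mathcal{F}_i$. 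Every vertex of $V(\mathcal{J}_i)\setminus W$ lying in $U$ must be covered both by $\mathcal{A}_i$ and by $\mathcal{B}_i$. Thus $\bigcup\mathcal{A}_i$ and $\bigcup\mathcal{B}_i$ agree outside $W$.

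\textbf{Step 4 (the leaf copy; main obstacle).} We now split into two cases.
\begin{itemize}
\item[(a)] If $\mathcal{A}_i=\mathcal{B}_i$, we delete these edges from both families and pass to the smaller pair of families. The difficulty is that vertices of $W$ covered in $\mathcal{A}$ only through copy $i$ may be covered in $\mathcal{B}$ by another copy, so equality of unions is not automatically preserved.
\item[(b)] If $\mathcal{A}_i\neq\mathcal{B}_i$, the two unions differ only inside $W$. We want to contradict the $(t+1)$-union-freeness of $\mathcal{F}$, or its sparsity, by adding to $\mathcal{A}_i$ or $\mathcal{B}_i$ edges of $\mathcal{F}_i$ that cover the discrepancy in $W$.
\end{itemize}
The tool for both cases is a vertex count. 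Each edge from another copy meets $V(\mathcal{J}_i)$ in at most $k-1$ vertices (Step 2 together with $|W|\le k$). Lemma~\ref{lem:uf-LB-F}(ii) says that any $\ell\le 2t+2$ edges of one copy span at least $\ell tk-(\ell-1)k$ vertices. Using these, I would do a global count of $|U|$, computed once via $\mathcal{A}$ and once via $\mathcal{B}$.

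The plan for this count is as follows. Combine the per-copy bounds from Lemma~\ref{lem:uf-LB-F}(ii) with the tree-ordering of Step 3 to lower-bound $|U|$ in terms of $|\mathcal{A}\cup\mathcal{B}|$. Upper-bound $|U|$ by $|\bigcup\mathcal{A}|\le|\mathcal{A}|tk$ and likewise for $\mathcal{B}$. The aim is that the saving of at least one vertex per cross-copy contact, coming from the bound $k-1$ rather than $k$, makes the two bounds incompatible unless $|I|=1$.

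I expect this bookkeeping at the leaf copy to be the main obstacle: making Step 4 rigorous, and in particular showing that the shared set $W$ cannot absorb the difference between $\mathcal{A}$ and $\mathcal{B}$. Once it is done, induction on $|I|$ reduces everything to the single-copy case of Step 1.
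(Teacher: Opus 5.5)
Your Steps 1 and 2 are correct; they are the paper's Claim~(i)/(ii). The proposal still has a genuine gap. It never reaches a contradiction (Step 4 is left open, as you say), and the reduction it rests on does not hold.

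\textbf{Step 3 is false.} Local sparsity of $\mathcal{P}$ only bounds the \emph{total} overlap of every subfamily, $\sum_j |V(\mathcal{J}_{i_j})\cap\bigcup_{j'<j}V(\mathcal{J}_{i_{j'}})|\le (q-1)k$. It does not bound the overlap at each step, so a leaf with $|W|\le k$ need not exist. For example, take $k=3$ and three $m$-sets pairwise meeting in $2$ vertices, with no common vertex. They satisfy the induced condition, $|V_i\cup V_j|=2m-2\ge 2m-k$, and $|V_1\cup V_2\cup V_3|=3m-6=3m-2k$. Yet each meets the union of the other two in $4>k$ vertices.

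\textbf{The count of $|U|$ cannot finish the proof.} Even with a leaf, deleting common edges does not preserve equal unions, as you note. Moreover, no way of organising the global count of $|U|$ makes it contradictory. For $t=2$, take $A_3,B_3$ with $|A_3\cap B_3|=k$. Set $C_1=(A_3\setminus B_3)\cup R_1$ and $C_2=(B_3\setminus A_3)\cup R_2$, with $R_1,R_2$ fresh disjoint $k$-sets. Then $C_1\cup C_2\cup A_3=C_1\cup C_2\cup B_3$, and every $\ell$ of these four $2k$-sets span at least $\ell\cdot 2k-(\ell-1)k$ vertices. So the sparsity bounds, applied to any subfamily, cannot exclude this configuration; what excludes it is structural:
\begin{itemize}
\item by Step 2, $A_3$ and $B_3$ lie in one copy $\mathcal{F}_i$;
\item by $3$-union-freeness of $\mathcal{F}$, some $C_j\notin\mathcal{F}_i$;
\item then the per-edge bound $|C_j\cap V(\mathcal{F}_i)|\le k-1$ applies.
\end{itemize}
In general this gives a sparsity violation for a \emph{three-edge subfamily} such as $\{C_2,A_3,B_3\}$. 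The one-vertex saving at the cross-copy contact does not lower $|U|$ at all here.

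\textbf{The paper's route.} The paper uses no induction over copies and keeps the common edges.
\begin{itemize}
\item[(a)] It shows $\mathcal{H}(\mathcal{F})$ is $(\ell tk-(\ell-1)k-1,\ell)$-free for $2\le\ell\le 2t+2$. This follows from $|\bigcup_i X_i|\ge\sum_i|X_i|+|\bigcup_iV(\mathcal{F}_i)|-\sum_i|V(\mathcal{F}_i)|$, where $X_i$ is the union of the chosen edges in copy $i$. This is the lower bound you wanted, and it needs only the total-overlap form of sparsity.
\item[(b)] It proves $t$-cover-freeness, which your aside ``handle it in the same way'' skips. If $F_{t+1}\subseteq\bigcup_{i\le t}F_i$, then (a) with $\ell=t+1$ forces $F_1,\dots,F_t$ to be pairwise disjoint. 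Since each $|F_{t+1}\cap F_i|\le k$, each equals $k$. Step 2 then puts all $t+1$ edges in one copy, contradicting Lemma~\ref{lem:uf-LB-F}(i).
\item[(c)] Hence $|\mathcal{A}|=|\mathcal{B}|=t+1$. Let $p=|\mathcal{A}\cap\mathcal{B}|$. Comparing (a) for the $2t+2-p$ edges of $\mathcal{A}\cup\mathcal{B}$ with $|\bigcup\mathcal{A}|\le(t+1)tk$ leaves only $p=t$, or $t=2$ and $p=1$.
\item[(d)] For $p=t$, the sharper bound $|X\cup A_{t+1}\cup B_{t+1}|\le|X|+|A_{t+1}\cap B_{t+1}|$, set against (a) with $\ell=t+2$, leaves only $t=2$ with $|A_3\cap B_3|=k$. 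This is exactly the tight configuration above, and the structural argument finishes it. For $t=2$, $p=1$, equality forces $A_2\cup A_3=B_2\cup B_3$, contradicting (a) with $\ell=4$.
\end{itemize}
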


\begin{proof}[Proof of Theorem~\ref{thm:main-union-free}, $a=0$]
    Given integers $t\ge 2,~k\ge 2$ and a real $\epsilon\in(0,1)$, choose $m$ sufficiently large and let $\mathcal{F}\subseteq\binom{[m]}{tk}$ be a $tk$-graph satisfying properties (i)--(iii) of Lemma~\ref{lem:uf-LB-F}. Since $\mathcal{F}$ contains a $tk$-edge, its $k$-shadow $\sigma_k(\mathcal{F})$ has at least $\binom{tk}{k}\ge2$ edges. Thus Lemma~\ref{lem:main-lemma} applies with $\mathcal{J}=\sigma_k(\mathcal{F})$, $e=2t+2$, and $\epsilon$ replaced by $\epsilon/2$. It yields, for all sufficiently large $n$, a near-optimal, induced, $(2t+2)$-locally sparse $\sigma_k(\mathcal{F})$-packing $\mathcal{P}$. Given the above $\mathcal{F}$ and $\mathcal{P}$, let $\mathcal{H}(\mathcal{F})$ be the $tk$-graph defined in Construction~\ref{construction}. 
    By Lemma~\ref{lem:a=0-H(F)}, $\mathcal{H}(\mathcal{F})$ is $(t+1)$-union-free. Moreover, \eqref{eq:lowerbound-a=0} shows that $|\mathcal{H}(\mathcal{F})|$ meets the lower bound required in \eqref{eq:a=0}. This completes the proof for the case $a=0$ in Theorem~\ref{thm:main-union-free}. %$a=0$ in Theorem~\ref{thm:main-union-free}. 
\end{proof} 

We postpone the proofs of Lemmas \ref{lem:uf-LB-F} and \ref{lem:a=0-H(F)} to the next two subsections.

\subsection{Proof of Lemma~\ref{lem:uf-LB-F}}

\noindent We construct the required $tk$-graph $\mathcal{F}$ as follows. Let $c=c(tk-1,k,2t+2)$ be the constant obtained by applying Lemma~\ref{lem:e^--free} with $r=tk-1$ and $e=2t+2$. Choose $q_0=q_0(t,k,\epsilon)$ sufficiently large such that, for every $q\ge q_0$,
$$\frac{\binom{q}{k}}{\lceil c(q-1)^{k+\frac{1}{2t+1}}\rceil}\le \frac{\epsilon}{2}\binom{tk-1}{k-1}.$$
Set $m_0=q_0+\lceil c q_0^{k+\frac{1}{2t+1}}\rceil$ and fix any $m\ge m_0$. Let $q$ be the smallest positive integer satisfying $q+\lceil c q^{k+\frac{1}{2t+1}}\rceil\ge m$, and $N=m-q$. Then $q\ge q_0$. By the minimality of $q$, we have $q-1+\lceil c(q-1)^{k+\frac{1}{2t+1}}\rceil<m$, which implies $$\lceil c(q-1)^{k+\frac{1}{2t+1}}\rceil\le N\le \lceil c q^{k+\frac{1}{2t+1}}\rceil.$$

Let $\mathcal{G}\subseteq\binom{[q]}{tk-1}$ be a $(tk-1)$-graph obtained from Lemma~\ref{lem:e^--free}. Then $\mathcal{G}$ has at least $cq^{k+\frac{1}{2t+1}}$ edges and is $(\ell(tk-1)-(\ell-1)k-1,\ell)$-free for all $2\le\ell\le 2t+2$. Since $|\mathcal{G}|\ge \lceil c q^{k+\frac{1}{2t+1}}\rceil\ge N$, we may select $N$ distinct edges $G_1,\ldots,G_N$ from $\mathcal{G}$. Introduce a set $U=\{u_i:i\in[N]\}$ of new vertices disjoint from $[q]$, and identify $[q]\cup U$ with $[m]$. Define
 $$\mathcal{F}=\{G_i\cup \{u_i\}:i\in[N]\}\subseteq\binom{[m]}{tk}.$$
The family $\mathcal{F}$ is $(t+1)$-union-free, indeed $s$-union-free for every $s\ge1$, because the private vertex $u_i$ appears only in the edge $G_i \cup \{u_i\}\in\mathcal{F}$. Thus the set of private vertices contained in a union determines exactly which edges of $\mathcal{F}$ participate in that union.

    To prove (ii), take any $I\subseteq[N]$ with $|I|=\ell$, where $2\le\ell\le 2t+2$. Since the private vertices are distinct and outside $[q]$, and since $\mathcal{G}$ is $(\ell(tk-1)-(\ell-1)k-1,\ell)$-free, we have 
    $$\left|\bigcup_{i\in I}(G_i\cup \{u_i\})\right|=\left|\bigcup_{i\in I}G_i\right|+\ell\ge \ell(tk-1)-(\ell-1)k+\ell=\ell tk-(\ell-1)k.$$
    This proves that $\mathcal{F}$ is $(\ell tk-(\ell-1)k-1,\ell)$-free for every $2\le \ell\le 2t+2$.

    Finally, the $k$-shadow of $\mathcal{F}$ consists of $k$-sets entirely in the base $[q]$ and $k$-sets containing one private vertex. Therefore, $|\sigma_k(\mathcal{F})|\le\binom{q}{k}+|\mathcal{F}|\binom{tk-1}{k-1}$. Using $|\mathcal{F}|=N$ and the choice of $q_0$, we obtain
    $$\frac{|\mathcal{F}|}{|\sigma_k(\mathcal{F})|}\ge \frac{N}{\binom{q}{k}+N\binom{tk-1}{k-1}}=\frac{1}{\binom{q}{k}/N+\binom{tk-1}{k-1}}\ge \frac{1}{(1+\epsilon/2)\binom{tk-1}{k-1}}\ge (1-\epsilon/2)\frac{1}{\binom{tk-1}{k-1}}.$$
    This proves (iii), and hence the lemma.

\subsection{Proof of Lemma~\ref{lem:a=0-H(F)}}

\noindent Before presenting the proof of Lemma~\ref{lem:a=0-H(F)}, it will be convenient to state a useful claim.

\begin{claim}\label{claim:induced}
    Let $\mathcal{F}$ be a $tk$-graph and $\mathcal{P}=\{(V(\mathcal{J}_i),\mathcal{J}_i):i\in[|\mathcal{P}|]\}$ be an induced $\sigma_k(\mathcal{F})$-packing. Let $\mathcal{H}(\mathcal{F})$ be defined as in Construction~\ref{construction}. Then the following hold:
    \begin{itemize}
        \item [(i)] Suppose that $\mathcal{F}_i$ and $\mathcal{F}_{i'}$ are two distinct copies of $\mathcal{F}$ in $\mathcal{H}(\mathcal{F})$. Then for every edge $F\in\mathcal{F}_i$, we have $|F\cap V(\mathcal{F}_{i'})|\le k-1$.
        \item [(ii)] If $\mathcal{F}$ is $(2tk-k-1,2)$-free, then for every distinct $F,F'\in\mathcal{H}(\mathcal{F})$, we have $|F\cap F'|\le k$.
    \end{itemize}
\end{claim}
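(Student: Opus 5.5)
For part (i), we argue by contradiction. Suppose some edge $F\in\mathcal{F}_i$ satisfies $|F\cap V(\mathcal{F}_{i'})|\ge k$.

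We then pick a $k$-subset $K\subseteq F\cap V(\mathcal{F}_{i'})$. Since $K\subseteq F\in\mathcal{F}_i$, the set $K$ lies in the $k$-shadow $\sigma_k(\mathcal{F}_i)=\mathcal{J}_i$, so $K$ is an edge of $\mathcal{J}_i$. Also $K\subseteq V(\mathcal{J}_i)\cap V(\mathcal{J}_{i'})$, because $V(\mathcal{F}_i)=V(\mathcal{J}_i)$ and $V(\mathcal{F}_{i'})=V(\mathcal{J}_{i'})$.

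The induced property in Definition~\ref{def:packing} gives $|V(\mathcal{J}_i)\cap V(\mathcal{J}_{i'})|\le k$. Combined with $|K|=k$, this forces $V(\mathcal{J}_i)\cap V(\mathcal{J}_{i'})=K$. The same definition then says this intersection is not an edge of $\mathcal{J}_i$, which contradicts $K\in\mathcal{J}_i$. Hence $|F\cap V(\mathcal{F}_{i'})|\le k-1$. The only point needing care is to take an honest $k$-subset of the intersection, so that the shadow definition applies.

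For part (ii), take distinct $F,F'\in\mathcal{H}(\mathcal{F})$ and split according to the copies they lie in.

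\emph{Different copies.} If $F\in\mathcal{F}_i$ and $F'\in\mathcal{F}_{i'}$ with $i\ne i'$, then $F'\subseteq V(\mathcal{F}_{i'})$. Part (i) gives
\[
|F\cap F'|\le |F\cap V(\mathcal{F}_{i'})|\le k-1.
\]

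\emph{Same copy.} If $F,F'$ both lie in the same $\mathcal{F}_i$, which is isomorphic to $\mathcal{F}$, then the $(2tk-k-1,2)$-freeness gives $|F\cup F'|\ge 2tk-k$. Therefore
\[
|F\cap F'|=2tk-|F\cup F'|\le k.
\]

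Edges of $\mathcal{H}(\mathcal{F})$ are distinct as sets: copies that share $tk\ge k$ vertices are excluded by part (i). So no other case arises. The main (mild) obstacle is part (i); part (ii) is routine once (i) is in hand.
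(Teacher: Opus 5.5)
Your proof is correct and follows the paper's argument. In part (i) you take a $k$-subset of $F\cap V(\mathcal{F}_{i'})$, note that it lies in $\sigma_k(\mathcal{F}_i)=\mathcal{J}_i$ and in $V(\mathcal{J}_i)\cap V(\mathcal{J}_{i'})$, and derive a contradiction with inducedness, spelling out that the intersection must then equal $K$; in part (ii) you use the same same-copy/different-copy split (your remark that an edge cannot lie in two copies is true but unnecessary, since the split is already exhaustive).
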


\begin{proof}
    To prove (i), first observe from Construction~\ref{construction} that $V(\mathcal{F}_{i})=V(\mathcal{J}_{i})$ and $V(\mathcal{F}_{i'})=V(\mathcal{J}_{i'})$. Now, if $|F\cap V(\mathcal{F}_{i'})|\ge k$, then by construction there exists a $k$-subset $T\subseteq F\subseteq V(\mathcal{J}_{i})$ such that $T\subseteq V(\mathcal{J}_{i})\cap V(\mathcal{J}_{i'})$. Since $T\in\binom{F}{k}\subseteq \sigma_k(\mathcal{F}_{i})=\mathcal{J}_{i}$, this contradicts the assumption that $\mathcal{P}$ is induced.

    The proof of (ii) splits into two cases. If $F$ and $F'$ belong to the same copy of $\mathcal{F}$, then the $(2tk-k-1,2)$-freeness of $\mathcal{F}$ directly implies $|F \cap F'| \le k$. If $F$ and $F'$ lie in different copies, say $F \in \mathcal{F}_i$ and $F' \in \mathcal{F}_{i'}$ with $i \neq i'$, then by (i), $|F \cap F'| \le |F \cap V(\mathcal{F}_{i'})| \le k-1$. In either case, the required inequality holds.
\end{proof}

We are now in a position to prove Lemma~\ref{lem:a=0-H(F)}. 

\begin{proof}[Proof of Lemma~\ref{lem:a=0-H(F)}]
    Let us start by showing that $\mathcal{H}(\mathcal{F})$ is $(\ell\cdot tk-(\ell-1)k-1,\ell)$-free for all $2\le\ell\le 2t+2$. Fix such an $\ell$ and let $F_1, \ldots, F_{\ell}$ be any $\ell$ distinct edges of $\mathcal{H}(\mathcal{F})$. These edges belong to some set of distinct copies $\mathcal{F}_1, \ldots, \mathcal{F}_s$ of $\mathcal{F}$, for some $1\le s\le 2t+2$. For each $i \in [s]$, let $L_i \subseteq [\ell]$ consist of those indices $j$ for which $F_j \in \mathcal{F}_i$, and write $\ell_i = |L_i|$. Observe that the family $\{L_i : i \in [s]\}$ partitions $[\ell]$, and hence $\sum_{i=1}^s \ell_i = \ell$. 
    
    For each $i\in[s]$, let $X_i = \cup_{j \in L_i} V(F_j)$. Since $X_i\subseteq V(\mathcal{F}_i)$, we have $$\left|\bigcup_{i=1}^sV(\mathcal{F}_i)\right|\le\left|\bigcup_{i=1}^sX_i\right|+\left(\sum_{i=1}^s|V(\mathcal{F}_i)|-\sum_{i=1}^s|X_i|\right).$$ 
    Using the freeness of $\mathcal{F}$ from Lemma~\ref{lem:uf-LB-F}(ii), we have $|X_i|\ge \ell_i\cdot tk-(\ell_i-1)k$ for every $i\in[s]$ (with the case $\ell_i=1$ being trivial). Using the local sparsity of $\mathcal{P}$, we also have $|\bigcup_{i=1}^sV(\mathcal{F}_i)|\ge sm-(s-1)k$ (again trivial for $s=1$). Since each $X_i\subseteq V(\mathcal{F}_i)$, these two inequalities give 
    \begin{align*}
        \left|\bigcup_{i=1}^\ell F_i\right|=\left|\bigcup_{i=1}^s X_i\right|
        &\ge \sum_{i=1}^s|X_i|+\left|\bigcup_{i=1}^sV(\mathcal{F}_i)\right|-\sum_{i=1}^s|V(\mathcal{F}_i)|\\
        &\ge \sum_{i=1}^s\left(\ell_i\cdot tk-(\ell_i-1)k\right)+\left(sm-(s-1)k\right)-sm\\
        &=\ell\cdot tk-(\ell-1)k,
    \end{align*}
    as needed. 
    
    Next, we prove that $\mathcal{H}(\mathcal{F})$ is $t$-cover-free. Suppose for contradiction that there exist $t+1$ distinct edges $F_1,\ldots,F_{t+1}\in\mathcal{H}(\mathcal{F})$ with $F_{t+1}\subseteq \cup_{i=1}^{t}F_i$. This implies that
    \begin{equation}\label{eq:t*tk}
        \left|\bigcup_{i=1}^{t+1}F_i\right|=\left|\bigcup_{i=1}^{t}F_i\right|\le t\cdot tk.
    \end{equation}

    \noindent If the inequality \eqref{eq:t*tk} is strict, then we obtain a contradiction to the $((t+1)\cdot tk-tk-1,t+1)$-freeness of $\mathcal{H}(\mathcal{F})$.
    If equality holds, then $F_1,\ldots,F_t$ are pairwise disjoint. Moreover, by Claim~\ref{claim:induced}(ii), $|F_{t+1}\cap F_i|\le k$ for all $i\in [t]$. Since $F_{t+1}\subseteq \cup_{i=1}^{t}F_i$, it follows that $|F_{t+1}\cap F_i|= k$ for every $i$. Claim~\ref{claim:induced}(i) then implies that $F_1,\ldots,F_{t+1}$ are contained in the same copy of $\mathcal{F}$, contradicting the fact that $\mathcal{F}$ is $t$-cover-free, which follows from the assumption that $\mathcal{F}$ is $(t+1)$-union-free. 

    Finally, we prove that $\mathcal{H}(\mathcal{F})$ is $(t+1)$-union-free. Assume for contradiction that there exist distinct subfamilies $\mathcal{A},\mathcal{B}\subseteq \mathcal{H}(\mathcal{F})$, with $1\le |\mathcal{A}|,|\mathcal{B}|\le t+1$, such that $\cup_{A\in \mathcal{A}}A=\cup_{B\in \mathcal{B}}B$. Since $\mathcal{H}(\mathcal{F})$ is $t$-cover-free, we have $|\mathcal{A}|=|\mathcal{B}|=t+1$.
    Without loss of generality, write
    $$\mathcal{A}=\{C_1,\ldots,C_p,A_{p+1},\ldots, A_{t+1}\},\mathcal{B}=\{C_1,\ldots,C_p,B_{p+1},\ldots,B_{t+1}\},$$
    where $\mathcal{A}\cap\mathcal{B}=\{C_1,\ldots,C_p\}$ and $0\le p\le t$. Let $X=\cup_{i=1}^pC_i$, $Y=\cup_{i=p+1}^{t+1}A_i$, and $Z=\cup_{i=p+1}^{t+1}B_i$. From $X\cup Y=X\cup Z$, we obtain
    \begin{align}\label{eq-union-free-upper}
        |X\cup Y\cup Z|=|X\cup Y|=|X\cup Z|\le (t+1)\cdot tk.
    \end{align}
    
    \noindent Observe that $\mathcal{A}\cup\mathcal{B}$ consists of $2t+2-p$ distinct edges. Applying the $((2t+2-p)\cdot tk-(2t+1-p)k-1,2t+2-p)$-freeness of $\mathcal{H}(\mathcal{F})$ yields
    \begin{align}\label{eq-union-free-lower}
        |X\cup Y\cup Z|\ge (2t+2-p)\cdot tk-(2t+1-p)k.
    \end{align}

    \noindent Together with the upper bound \eqref{eq-union-free-upper}, we obtain 
    $$(t+1)\cdot tk\ge (2t+2-p)\cdot tk-(2t+1-p)k.$$ 
    \noindent Simplifying this inequality leads to $t-\frac{1}{t-1}\le p\le t$. Given that $p$ is an integer between $0$ and $t$, the only feasible cases are:
    (1) $t\ge 2$ and $p=t$; (2) $t=2$ and $p=1$. We will treat these two cases separately.
    
    \paragraph{Case (1): $t\ge 2$ and $p=t$.} Then $Y=A_{t+1}$ and $Z=B_{t+1}$. By Claim~\ref{claim:induced} (ii), write $|A_{t+1}\cap B_{t+1}|=k-x$ with $x\ge 0$. The condition $Y\Delta Z=A_{t+1}\Delta B_{t+1}\subseteq X$ implies 
    $$|X\cup A_{t+1}\cup B_{t+1}|\le |X|+|A_{t+1}\cap B_{t+1}|\le t\cdot tk + k-x.$$
    Since $\mathcal{H}(\mathcal{F})$ is $((t+2)\cdot tk-(t+1)k-1,\ t+2)$-free, we also have 
    $$|X\cup A_{t+1}\cup B_{t+1}|\ge (t+2)\cdot tk-(t+1)k.$$
    Combining the two inequalities above gives $(t+2)\cdot tk-(t+1)k\le t\cdot tk + k-x$, which holds only if $t=2$ and $x=0$. 
    
    Now $\mathcal{H}(\mathcal{F})$ is a $2k$-graph, and we have $C_1\cup C_2\cup A_3=C_1\cup C_2\cup B_3$ with $|A_3\cap B_3|=k$. By Claim~\ref{claim:induced}(i), $A_3$ and $B_3$ belong to the same copy of $\mathcal{F}$, say $\mathcal{F}_i$. Since $\mathcal{F}_i$ is $3$-union-free, at least one of $C_1$ and $C_2$ does not belong to  $\mathcal{F}_i$; assume $C_1\notin \mathcal{F}_i$. Then Claim~\ref{claim:induced}(i) yields $|C_1\cap (A_3\Delta B_3)|\le |C_1\cap V(\mathcal{F}_i)|\le k-1$. As $A_3\Delta B_3\subseteq X=C_1\cup C_2$ and $|A_3\Delta B_3|=2k$, we have 
    $$|C_2\cap (A_3\Delta B_3)|\ge|A_3\Delta B_3|-|C_1\cap (A_3\Delta B_3)|\ge 2k-(k-1)=k+1.$$
    Therefore, $$|C_2\cup A_3\cup B_3|\le|C_2|+|A_3\cup B_3|-|C_2\cap (A_3\Delta B_3)|\le 4k-1=3\cdot 2k-2k-1,$$ 
    contradicting the $(3\cdot 2k-2k-1,3)$-freeness of $\mathcal{H}(\mathcal{F})$. 
    
    \paragraph{Case (2): $t=2$ and $p=1$.} Now we have $C_1\cup A_2\cup A_3=C_1\cup B_2\cup B_3$. By \eqref{eq-union-free-upper}, we have 
    $$|C_1\cup A_2\cup A_3\cup B_2\cup B_3|\le 6k.$$
    Since $\mathcal{H}(\mathcal{F})$ is $(5\cdot 2k-4k-1,5)$-free, we also have 
    $$|C_1\cup A_2\cup A_3\cup B_2\cup B_3|\ge 5\cdot 2k-4k=6k.$$
    Consequently, 
    $$|C_1\cup A_2\cup A_3\cup B_2\cup B_3|=|C_1\cup A_2\cup A_3|=|C_1\cup B_2\cup B_3|=6k.$$
    Given that $\mathcal{H}(\mathcal{F})$ is a $2k$-graph, the above equation implies that $C_1,A_2,A_3$ are pairwise disjoint, and so are $C_1,B_2,B_3$. This forces $A_2\cup A_3=B_2\cup B_3$. Therefore, 
    $$|A_2\cup A_3\cup B_2\cup B_3|=|A_2\cup A_3|\le 4k,$$ 
    contradicting the $(4\cdot 2k-3k-1,4)$-freeness of $\mathcal{H}(\mathcal{F})$. 

    \vspace{10pt}

    Therefore, $\mathcal{H}(\mathcal{F})$ is $(t+1)$-union-free. This completes the proof.
\end{proof}

\section{Concluding remarks}\label{sec:conclusion}

\noindent We have established the asymptotics of $U_{t+1}(n,r)$, up to a lower-order term, for almost all integers $t \ge 2$ and $r \ge 3$. The only families not covered are $\{U_3(n,2s-1), U_{s+1}(n,s+1), U_{s+1}(n,s+2): s \ge 2\}$ and $U_4(n,7)$. Our work leaves several natural open questions.

\paragraph{The exceptional cases.} For the parameters where our construction does not apply, the known lower and upper bounds do not match, leaving a gap in the exponent of $n$. It is interesting to determine the Tur\'an exponent or its non-existence for these $U_{t+1}(n,r)$. In relation to this, we recall that F\"{u}redi and Ruszink\'{o} \cite{Furedi-Ruszinko-no-grid} proved $U_r(n,r) > n^{2-o(1)}$ for all $r \ge 3$ and conjectured $U_r(n,r) = o(n^2)$. If their conjecture is true, then $U_r(n,r)$ does not have a Tur\'an exponent.   

\paragraph{The case $U_2(n,r)$.} While our main theorem does not provide sharp asymptotic bounds for $U_2(n,r)$, we can improve the lower bound when $3$ divides $r$. For any $k \ge 1$, we can show $U_2(n,3k) \ge (1-o(1))\binom{n}{2k}/\binom{3k}{2k}$, which improves the longstanding lower bound $U_2(n,3k)\ge\left(1/(3k)^{(3k)}-o(1)\right)n^{2k}$ of Frankl and F\"uredi (see \cite[Proposition 1.2 and Theorem 1.3]{Frankl-Furedi-2-union-free}). Since the proof follows a strategy analogous to that of Theorem~\ref{thm:main-union-free}, we omit its details. A fundamental question is whether the degenerate Tur\'{a}n density $\lim_{n \to \infty} U_2(n,r) / n^{\lceil 4r/3 \rceil / 2}$ exists. For $r=2$, this question reduces to the classic conjecture of Erd\H{o}s \eqref{eq:erdos-conjecture}.

\paragraph{Deterministic constructions.} Our results rely on probabilistic methods. The problem of finding explicit (deterministic) constructions that match these asymptotic bounds merits further investigation.

\section*{Acknowledgments}
\noindent M. Liu, C. Shangguan, and C. Zhang are supported by the National Natural Science Foundation of China under Grant Nos. 12571352 and 12231014, and the Fundamental Research Funds for the Central Universities. M. Liu is also supported by the China Scholarship Council and Institute for Basic Science IBS-R029-C4. 

{\small
\normalem
\bibliographystyle{plain}
\bibliography{Article}}

\appendix

\section{Proof of Theorem~\ref{thm:main-union-free}, $1\le a\le t-1$}\label{sec:a>0}

\noindent  This section proves the lower bound on $U_{t+1}(n,tk-a)$ for $1\le a\le t-1$ stated in Theorem~\ref{thm:main-union-free}. Let $t\ge 2$, $k\ge 2$, and $1\le a \le t-1$ be fixed integers. It suffices to prove that, apart from the cases $U_3(n,2k-1)$, $U_{t+1}(n,t+1)$, $U_{t+1}(n,t+2)$, and $U_4(n,7)$, the inequality
\begin{equation}\label{eq:a>0}
    U_{t+1}(n,tk-a)\ge(1-\epsilon)\cdot \frac{\binom{n}{k}}{\binom{tk-a}{k}-m(tk-a,k,t-a-1)}
\end{equation}
holds for every $\epsilon>0$ and sufficiently large $n$.

Our proof relies on the following construction. Let $\mathcal{G}\subseteq\binom{[tk-a]}{k}$ be a $k$-graph on $(tk-a)$-vertices that contains no $t-a$ pairwise disjoint edges and has the maximum number of edges, i.e., $|\mathcal{G}|=m(tk-a,k,t-1-a)$.
Let 
\begin{equation}\label{eq:F:a>0}
    \mathcal{F}=\binom{[tk-a]}{k}\setminus \mathcal{G}.
\end{equation}
To apply Lemma~\ref{lem:main-lemma} with $\mathcal{J}=\mathcal{F}$, we need to verify that $|\mathcal{F}|\ge2$. If $a=t-1$, then $\mathcal{G}=\emptyset$ and hence $\mathcal{F}=\binom{[tk-a]}{k}$, which contains at least two edges. Now suppose that $1\le a\le t-2$. If $|\mathcal{F}|\le1$, then there is a $k$-set $K$ such that $\binom{[tk-a]}{k}\setminus\{K\}\subseteq\mathcal{G}$. We claim that $\binom{[tk-a]}{k}\setminus \{K\}$ contains $t-a$ pairwise disjoint edges. Indeed, since $|[tk-a]\setminus K|=(t-a-1)k+a(k-1)$ and $a(k-1)\ge1$, we can choose $t-a-1$ pairwise disjoint $k$-sets in $[tk-a]\setminus K$, with at least one vertex $v\in[tk-a]\setminus K$ remaining. Let $K'\subseteq K$ be any $(k-1)$-subset. Then $K'\cup{v}$ is a $k$-set disjoint from the previously chosen edges and distinct from $K$. Thus we obtain $t-a$ pairwise disjoint edges, all of which belong to $\mathcal{G}$, contradicting the defining property of $\mathcal{G}$. Thus $|\mathcal{F}|\ge2$. Let $\mathcal{P}=\{(V(\mathcal{F}_i),\mathcal{F}_i):i\in[|\mathcal{P}|]\}$ be a near-optimal $\mathcal{F}$-packing satisfying $|\mathcal{P}|\ge(1-\epsilon)\cdot\binom{n}{k}/|\mathcal{F}|$, and define the $(tk-a)$-graph $\mathcal{H}:=\{V(\mathcal{F}_i): 1\le i\le |\mathcal{P}|\}$. Since $|\mathcal{F}|=\binom{tk-a}{k}-m(tk-a,k,t-a-1)$, we obtain $$|\mathcal{H}|\ge(1-\epsilon)\cdot \frac{\binom{n}{k}}{\binom{tk-a}{k}-m(tk-a,k,t-a-1)},$$ which is exactly the bound required in \eqref{eq:a>0}. It remains to require $\mathcal{P}$ to satisfy additional properties and to prove that $\mathcal{H}$ is $(t+1)$-union-free. The construction follows the same general idea as Construction~\ref{construction} for $a=0$, but is technically simpler because shadows are not involved.
 
The following lemma plays the same role for $a>0$ as Lemma~\ref{lem:a=0-H(F)} does for $a=0$.

\begin{lemma}\label{lem:union-free-t>2}
    Let $t\ge 2$, $k\ge 2$, $1\le a \le t-1$ be integers such that $$(t+1,tk-a)\not\in\{(3,2s-1),(s+1,s+1),(s+1,s+2):s\ge 2\}\cup\{(4,7)\}.$$
    Let $\mathcal{F}\subseteq\binom{[tk-a]}{k}$ be the $k$-graph defined in \eqref{eq:F:a>0} and $\mathcal{P}=\{(V(\mathcal{F}_i),\mathcal{F}_i):~i\in[|\mathcal{P}|]\}$ be an induced, $(2t+2)$-locally sparse $\mathcal{F}$-packing. Then the $(tk-a)$-graph $\mathcal{H}:=\{V(\mathcal{F}_i):~i\in[|\mathcal{P}|]\}$ is $(t+1)$-union-free.
\end{lemma}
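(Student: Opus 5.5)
The plan is to mirror the proof of Lemma~\ref{lem:a=0-H(F)}, with the $k$-graph $\mathcal{G}$ now taking over the role of the private vertices. Write $r=tk-a$, and for each $i$ let $H_i=V(\mathcal{F}_i)$. Let $\mathcal{G}_i=\binom{H_i}{k}\setminus\mathcal{F}_i$ be the copy of $\mathcal{G}$ on $H_i$.

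\textbf{Step 1 (basic facts about $\mathcal{H}$).} Local sparsity with $\ell=2$ gives $|H_i\cap H_j|\le k$ for $i\ne j$. Inducedness gives more: if $|H_i\cap H_j|=k$, then $H_i\cap H_j\in\mathcal{G}_i\cap\mathcal{G}_j$. Since $\mathcal{G}$ has no $t-a$ pairwise disjoint edges, the following holds.

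\emph{No edge $H_0$ contains $t-a$ pairwise disjoint $k$-sets, each of the form $H_0\cap H_j$ for distinct $j$.}

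Local sparsity also makes $\mathcal{H}$ $(\ell r-(\ell-1)k-1,\ell)$-free for all $2\le\ell\le 2t+2$. This is the counting tool used throughout.

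\textbf{Step 2 ($t$-cover-freeness).} Suppose $H_0\subseteq H_1\cup\dots\cup H_t$ with all $t+1$ edges distinct. Each $|H_0\cap H_j|\le k$, and the intersections cover $r=tk-a$ vertices, so the total deficiency is small. Applying local sparsity to $\{H_0\}\cup\{H_j:j\in S\}$ for subsets $S$, the union of $H_0$ with the $H_j$ ($j\in S$) is at least $(|S|+1)r-|S|k$. Together with the upper bound coming from $H_0$ being covered, this should force the following.
\begin{itemize}
\item The sets $H_j$ ($j\ge 1$) are nearly disjoint among themselves.
\item Their traces on $H_0$ are nearly disjoint.
\end{itemize}
A pigeonhole count (deficiency at most $a$ among $t$ traces of size at most $k$) then yields at least $t-a$ indices with $|H_0\cap H_j|=k$, and these traces are pairwise disjoint. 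This contradicts Step 1.

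The delicate point is making the disjointness of the full traces rigorous. The overlap between two traces lowers the union count of the corresponding sub-configuration, so local sparsity on $\{H_0,H_i,H_j\}$ (and larger subfamilies) should control it. I expect some small parameter cases to survive here; these are candidates for the listed exceptions, e.g.\ $r=t+1,t+2$, where $k$ is tiny relative to $t$.

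\textbf{Step 3 (union-freeness).} Take distinct $\mathcal{A},\mathcal{B}$ with equal union. By Step 2, $|\mathcal{A}|=|\mathcal{B}|=t+1$, sharing $p$ common edges. Comparing the upper bound $(t+1)r$ with the sparsity lower bound $(2t+2-p)r-(2t+1-p)k$ gives
\[
(t+1-p)(r-k)\le k,
\]
i.e.\ $(t+1-p)((t-1)k-a)\le k$. This confines us to $p=t$, or to small cases such as $t=2$ with $p=1$.

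For $p=t$, let $A,B$ be the two non-shared edges. Then $A\Delta B\subseteq X$, and the sparsity of the $t+2$ edges forces $|A\cap B|$ to be exactly $k$ or very close to it. When $|A\cap B|=k$, this $k$-set lies in $\mathcal{G}$ of both copies. Since each $C_i$ meets $A$ in at most $k$ vertices, covering $|A\setminus B|=r-k$ by the $t$ shared edges $C_i$ then gives disjoint traces on $A$ as in Step 2. Together with $A\cap B$, these give $t-a$ disjoint members of $\mathcal{G}_A$, a contradiction. Alternatively, if some $C_i$ has a large trace on $A\Delta B$, the triple $\{C_i,A,B\}$ violates $(3r-2k-1,3)$-freeness, exactly as in Case (1) of Lemma~\ref{lem:a=0-H(F)}.

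The remaining small cases ($t=2$, and small $r$) will be handled by direct counting as in Case (2) of Lemma~\ref{lem:a=0-H(F)}. The main obstacle is the exact bookkeeping in Steps 2 and 3, showing that the traces are disjoint and number at least $t-a$. I expect precisely the excluded pairs $(3,2s-1)$, $(s+1,s+1)$, $(s+1,s+2)$ and $(4,7)$ to be where this counting fails, and I would check the inequalities case by case against that list.
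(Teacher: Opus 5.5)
\textbf{Step 2 is not proved.} Your skeleton matches the paper's: first $t$-cover-freeness, then $|\mathcal{A}|=|\mathcal{B}|=t+1$, then a sparsity count in $p$ and a case analysis. But your proposed mechanism in Step 2 does not give what you need. Local sparsity does not imply that the full traces $H_0\cap H_j$ are pairwise disjoint. Two traces of size $k$ may overlap in $H_i\cap H_j\subseteq H_0$; this is compatible with $(3r-2k-1,3)$-freeness, and sparsity of the whole family $\{H_0,\dots,H_t\}$ only bounds the total overlap among $H_1,\dots,H_t$ by $a$. So ``at least $t-a$ full traces, and these are pairwise disjoint'' does not follow.

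The missing idea is greedy disjointification. Put $T_j=(H_0\cap H_j)\setminus\bigcup_{i<j}H_i$.
\begin{itemize}
\item The $T_j$ partition $H_0$ and each has size at most $k$.
\item Since $|H_0|=(t-a)k+a(k-1)$, at least $t-a$ of them have size exactly $k$.
\item Each such $T_j$ equals the whole intersection $H_0\cap H_j$, so by inducedness it is a copy of an edge of $\mathcal{G}$.
\item These $T_j$ are pairwise disjoint by construction, giving $t-a$ disjoint edges of $\mathcal{G}$.
\end{itemize}
This uses only inducedness, and no exceptional cases arise in cover-freeness, contrary to your expectation.

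\textbf{Step 3 is not closed either.}
\begin{itemize}
\item Your key inequality is miscomputed. Comparing $(t+1)r$ with $(2t+2-p)r-(2t+1-p)k$ gives $(t+1-p)(r-k)\le tk$, not $\le k$; your version would already rule out $p=t$ in most cases.
\item Your $p=t$ argument fails. If $|A\cap B|=k$, then $A\setminus B$ has $(t-1)k-a$ vertices spread over $t$ traces of size at most $k$. The deficiency is $k+a$, not $a$, so together with $A\cap B$ you only get about $t-a-k+1$ disjoint members of $\mathcal{G}_A$, not $t-a$.
\end{itemize}

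What works is pure counting. From $(t+2)r-(t+1)k\le|X\cup A\cup B|\le|X|+|A\cap B|\le tr+k$ you get $(t-2)k\le 2a$. The case $p=t-1$ survives under exactly the same condition, and outside the excluded list $p\le t-2$ never survives. All $t=2$ cases are excluded, since then $r=2k-1$. So the only surviving triples are $(t,k,a)\in\{(3,4,2),(4,3,3)\}$.

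In both, $a=t-1$, so $\mathcal{G}=\emptyset$ and inducedness gives $|A\cap B|\le k-1$.
\begin{itemize}
\item For $p=t$, the bound $|A\cap B|\le k-1$ contradicts the count directly.
\item For $p=t-1$, the count is tight. This forces $C_1,\dots,C_{t-1},A_t,A_{t+1}$ to be pairwise disjoint, and likewise with the $B$'s. Hence $A_t\cup A_{t+1}=B_t\cup B_{t+1}$, which contradicts $(4r-3k-1,4)$-freeness.
\end{itemize}
Your proposal identifies neither these two residual triples nor the $\mathcal{G}=\emptyset$ observation that disposes of them.
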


\begin{proof}\label{pf:general case}
    We first prove that $\mathcal{H}$ is $t$-cover-free. Suppose for contradiction that there exist $t+1$ $\mathcal{F}_i$'s such that $V(\mathcal{F}_{t+1})\subseteq \cup_{i=1}^tV(\mathcal{F}_i)$. Clearly, the sets $T_1=V(\mathcal{F}_{1}) \cap V(\mathcal{F}_{t+1})$ and 
    $$T_i=V(\mathcal{F}_{i})\cap \left(V(\mathcal{F}_{t+1})~\bigg \backslash~\left(\bigcup_{j=1}^{i-1}V(\mathcal{F}_{j})\right)\right),~2\le i\le t,$$
    
    \noindent are pairwise disjoint. Since $\mathcal{P}$ is induced, each $T_i$ has size at most $k$. Moreover, since $V(\mathcal{F}_{t+1})\subseteq \cup_{i=1}^t T_i$ and $|V(\mathcal{F}_{i})|=tk-a=(t-a)k+a(k-1)$, at least $t-a$ of the $T_i$ have size exactly $k$.
    Assume without loss of generality that $|T_1|=\cdots=|T_{t-a}|=k$. The induced property of $\mathcal{P}$ further implies that for every $1\le i\le t-a$, $T_i=\binom{V(\mathcal{F}_{t+1})}{k}\cap\binom{V(\mathcal{F}_i)}{k}$ and $T_i\notin \mathcal{F}_{t+1}$. Therefore, $$\{T_1,\ldots,T_{t-a}\}\subseteq\binom{V(\mathcal{F}_{t+1})}{k}\backslash\mathcal{F}_{t+1}\simeq\mathcal{G}.$$ 
    Since $T_1,\ldots,T_{t-a}$ are pairwise disjoint, we obtain $t-a$ pairwise disjoint edges in $\mathcal{G}$, a contradiction. This proves that $\mathcal{H}$ is $t$-cover-free.

    We proceed to show that $\mathcal{H}$ is $(t+1)$-union-free. 
    Assume for contradiction that there exist distinct $\mathcal{A},\mathcal{B}\subseteq \mathcal{H}$, with $1\le |\mathcal{A}|,|\mathcal{B}|\le t+1$, such that $\cup_{A\in \mathcal{A}}A=\cup_{B\in \mathcal{B}}B$. Since $\mathcal{H}$ is $t$-cover-free, we have $|\mathcal{A}|=|\mathcal{B}|=t+1$.
    Without loss of generality, write 
    $$\mathcal{A}=\{C_1,\ldots,C_p,A_{p+1},\ldots, A_{t+1}\},\mathcal{B}=\{C_1,\ldots,C_p,B_{p+1},\ldots,B_{t+1}\},$$
    where $\mathcal{A}\cap\mathcal{B}=\{C_1,\ldots,C_p\}$ and $0\le p\le t$. Denote $X=\cup_{i=1}^pC_i$, $Y=\cup_{i=p+1}^{t+1}A_i$, and $Z=\cup_{i=p+1}^{t+1}B_i$. Then $X\cup Y=X\cup Z$. Applying the $((2t+2-p)\cdot (tk-a)-(2t+1-p)k-1,2t+2-p)$-freeness of $\mathcal{H}$, we obtain 
    $$(2t+2-p)\cdot (tk-a)-(2t+1-p)k\le |X\cup Y\cup Z|=|X\cup Y|\le (t+1)\cdot (tk-a).$$
    Simplifying this inequality yields 
    \begin{equation}\label{eq:p-a>0}
        t-\fr{a+k}{tk-a-k}\le p\le t,
    \end{equation}
    which holds only when either $p=t$, or $p=t-1$ with $(t,k,a)\in\{(3,4,2),(4,3,3)\}$. In the latter case, one uses $a+k\ge tk-a-k$ together with the standing assumption that $(t+1,tk-a)\not\in\{(3,2s-1),(s+1,s+1),(s+1,s+2):s\ge 2\}\cup\{(4,7)\}$. 
    \noindent We analyze these two cases separately.
    
    \paragraph{Case (1): $p=t$.} Here $Y=A_{t+1}$ and $Z=B_{t+1}$. Since both $A_{t+1}$ and $B_{t+1}$ are vertex sets of $\mathcal{F}$-copies in $\mathcal{P}$, the induced property of $\mathcal{P}$ gives that $|A_{t+1}\cap B_{t+1}|\le k$.
    Since $Y\Delta Z=A_{t+1}\Delta B_{t+1}\subseteq X$, we have $|X\cup A_{t+1}\cup B_{t+1}|\le |X|+|A_{t+1}\cap B_{t+1}|$. Moreover, using the fact that $\mathcal{H}$ is $((t+2)\cdot(tk-a)-(t+1)k-1,t+2)$-free, we obtain
    \begin{equation}\label{eq:p=t}
        (t+2)\cdot (tk-a)-(t+1)k\le |X\cup A_{t+1}\cup B_{t+1}|\le |X|+|A_{t+1}\cap B_{t+1}|\le t\cdot (tk-a) + k.
    \end{equation}
    Simplifying \eqref{eq:p=t} gives $tk-2k-2a\le0$, which occurs only if $(t,k,a)\in\{(3,4,2),(4,3,3)\}$. In both cases we have $a=t-1$, and hence $|\mathcal{G}|=m(tk-a,k,0)=0$. So $\mathcal{F}=\binom{[tk-a]}{k}$. Under this condition, the induced property of $\mathcal{P}$ implies that for every pair of distinct $\mathcal{F}$-copies $\mathcal{F}_i$ and $\mathcal{F}_j$ in $\mathcal{P}$, $|V(\mathcal{F}_i) \cap V(\mathcal{F}_j)| \le k-1$. This implies the stronger inequality $|A_{t+1}\cap B_{t+1}|\le k-1$. Substituting this into \eqref{eq:p=t} yields $$(t+2)\cdot (tk-a)-(t+1)k-(t\cdot(tk-a) + k-1)\le0.$$
    However, for $(t,k,a)\in\{(3,4,2),(4,3,3)\}$, the left-hand side is positive, a contradiction.
    
    \paragraph{Case (2): $p=t-1$.} Suppose first that $(t,k,a)=(3,4,2)$. Then $C_1\cup C_2\cup A_3\cup A_4=C_1\cup C_2\cup B_3\cup B_4$. Hence,
    $$|C_1\cup C_2\cup A_3\cup A_4\cup B_3\cup B_4|=|C_1\cup C_2\cup A_3\cup A_4|\le (t+1)(tk-a)=40.$$
    On the other hand, since $\mathcal{H}$ is $(6\cdot 10-5\cdot4-1,6)$-free, we also have 
    $$|C_1\cup C_2\cup A_3\cup A_4\cup B_3\cup B_4|\ge 40.$$
    Thus we have equality in both bounds. Since $\mathcal{H}$ is a $10$-graph, $C_1,C_2,A_3,A_4$ have to be pairwise disjoint. Similarly, $C_1,C_2,B_3,B_4$ are also pairwise disjoint. Consequently, $A_3\cup A_4=B_3\cup B_4$, and hence $$|A_3\cup A_4\cup B_3\cup B_4|=|A_3\cup A_4|= 20,$$ which contradicts the $(4\cdot10-3\cdot4-1,4)$-freeness of $\mathcal{H}$. 
    
    If $(t,k,a)=(4,3,3)$, then $C_1\cup C_2\cup C_3\cup A_4\cup A_5=C_1\cup C_2\cup C_3\cup B_4\cup B_5$. Hence, 
    $$|C_1\cup C_2\cup C_3\cup A_4\cup A_5\cup B_4\cup B_5|=|C_1\cup C_2\cup C_3\cup A_4\cup A_5|\le (t+1)(tk-a)=45.$$
    The $(7\cdot9-6\cdot3-1,7)$-freeness of $\mathcal{H}$ yields the reverse inequality 
    $$|C_1\cup C_2\cup C_3\cup A_4\cup A_5\cup B_4\cup B_5|\ge 45.$$ 
    Again, equality holds in both bounds, implying that $A_4\cup A_5=B_4\cup B_5$. Consequently, $$|A_4\cup A_5\cup B_4\cup B_5|=|A_4\cup A_5|=18,$$
    which contradicts the $(4\cdot9-3\cdot3-1,4)$-freeness of $\mathcal{H}$. %This completes the analysis of Case (2).
\end{proof}

    Combining Lemmas \ref{lem:main-lemma} and \ref{lem:union-free-t>2}, we now prove the lower bound of Theorem~\ref{thm:main-union-free} for $1\le a\le t-1$.
    
    \begin{proof}[Proof of Theorem~\ref{thm:main-union-free} for $1\le a \le t-1$]
        Let $t\ge 2$, $k\ge 2$, $1\le a \le t-1$ be integers satisfying the assumption of Theorem~\ref{thm:main-union-free}. Let $\mathcal{F}$ be the $k$-graph defined in \eqref{eq:F:a>0}. As shown above, $|\mathcal{F}|\ge2$, and also $tk-a>k$. Thus Lemma~\ref{lem:main-lemma} applies with $\mathcal{J}=\mathcal{F}$ and $e=2t+2$. For any $\epsilon\in(0,1)$ and sufficiently large $n$, let $\mathcal{P}$ be the induced, $(2t+2)$-locally sparse, near-optimal packing obtained in this way. %by applying Lemma~\ref{lem:main-lemma} with $\mathcal{J}=\mathcal{F}$ and $e=2t+2$. 
        Then it follows from Lemma~\ref{lem:main-lemma} (i), (ii) and Lemma~\ref{lem:union-free-t>2} that $\mathcal{H}=\{V(\mathcal{F}_i): 1\le i\le |\mathcal{P}|\}\subseteq \binom{[n]}{tk-a}$ is $(t+1)$-union-free. Moreover, by Lemma~\ref{lem:main-lemma} (iii) and \eqref{eq:F:a>0} we have 
        $$|\mathcal{H}|=|\mathcal{P}|\ge(1-\epsilon)\frac{1}{|\mathcal{F}|}\cdot \binom{n}{k}=(1-\epsilon)\cdot \frac{1}{\binom{tk-a}{k}-m(tk-a,k,t-a-1)}\cdot \binom{n}{k},$$
        which proves the desired lower bound \eqref{eq:a>0}.
    \end{proof}

\section{Proof of Claim~\ref{claim-concentration}}\label{sec:claim}

\noindent To prove Claim~\ref{claim-concentration}, we need the following bounded differences inequality, also known as McDiarmid's inequality or the Hoeffding--Azuma inequality.

\begin{lemma}[see Theorem 9.1.3 in \cite{zhaoyufei22prob}]\label{lem:BDI}
    Let $\Omega_1,\ldots,\Omega_s$ be finite sets and $X_1\in\Omega_1,\ldots,X_s\in\Omega_s$ be independent random variables. Suppose that $f:\Omega_1\times\cdots\times\Omega_s\to\mathbb{R}$ satisfies
    $$|f(x_1,\ldots,x_s)-f(x_1',\ldots,x_s')|\le c_i,$$
    whenever $(x_1,\ldots,x_s)$ and $(x_1',\ldots,x_s')$ differ only on the $i$-th coordinate. Here $c_1,\ldots,c_s$ are non-negative constants. Then, the random variable $Z=f(X_1,\ldots,X_s)$ satisfies for every $\lambda\ge0$,
    $$\Pr[|Z-\mathbb{E}[Z]|\ge\lambda]\le2\exp{\left(\frac{-2\lambda^2}{c_1^2+\cdots+c_s^2}\right)}.$$
\end{lemma}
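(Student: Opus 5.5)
We prove the lemma by the standard martingale method: form the Doob martingale of $Z$, bound each increment's conditional moment generating function with Hoeffding's lemma, and finish with Markov's inequality. Let $\mathcal{F}_i$ be the $\sigma$-algebra generated by $X_1,\ldots,X_i$ (with $\mathcal{F}_0$ trivial). Set $Z_i=\mathbb{E}[Z\mid\mathcal{F}_i]$, so $Z_0=\mathbb{E}[Z]$ and $Z_s=Z$. Write $Z-\mathbb{E}[Z]=\sum_{i=1}^s D_i$ with $D_i=Z_i-Z_{i-1}$. By independence, $Z_i=g_i(X_1,\ldots,X_i)$, where
\[
g_i(x_1,\ldots,x_i)=\mathbb{E}\bigl[f(x_1,\ldots,x_i,X_{i+1},\ldots,X_s)\bigr].
\]
Likewise $Z_{i-1}$ is the average of $g_i(X_1,\ldots,X_{i-1},y)$ over $y$ distributed as $X_i$; this step also uses independence.

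The key step is the following claim: conditionally on $\mathcal{F}_{i-1}$, the increment $D_i$ takes values in an interval $[L_i,L_i+c_i]$ whose endpoints are $\mathcal{F}_{i-1}$-measurable. To see this, fix $x_1,\ldots,x_{i-1}$ and put
\[
L_i=\inf_{y}g_i(x_1,\ldots,x_{i-1},y)-Z_{i-1},\qquad R_i=\sup_{y}g_i(x_1,\ldots,x_{i-1},y)-Z_{i-1}.
\]
For any two values $y,y'$, the difference $g_i(\ldots,y)-g_i(\ldots,y')$ is an average over the common tail $X_{i+1},\ldots,X_s$ of differences of $f$ at two points that differ only in coordinate $i$. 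Hence it is at most $c_i$, and so $R_i-L_i\le c_i$.

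This is the step I expect to need the most care. The naive bound $|D_i|\le c_i$ only gives an interval of length $2c_i$, which loses a factor of $4$ in the exponent. The sharp constant depends on using the same tail variables on both sides, which is exactly where independence of the $X_j$ enters. Since the $\Omega_i$ are finite, every conditional expectation here is a finite sum, so no measure-theoretic subtleties arise.

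Next we apply Hoeffding's lemma. If $\mathbb{E}[D\mid\mathcal{F}]=0$ and $D\in[L,L+c]$ with $L$ $\mathcal{F}$-measurable, then $\mathbb{E}[e^{uD}\mid\mathcal{F}]\le e^{u^2c^2/8}$ for all real $u$. The proof is the usual one: by convexity of $x\mapsto e^{ux}$, bound $e^{uD}$ by the chord through the endpoints of the interval; then show that $\psi(u)=\log\bigl(\text{two-point MGF}\bigr)$ satisfies $\psi(0)=\psi'(0)=0$ and $\psi''\le c^2/4$, since $\psi''$ is the variance of a random variable supported on an interval of length $c$. Iterating by the tower property from $i=s$ down to $i=1$ gives
\[
\mathbb{E}\bigl[e^{u(Z-\mathbb{E}Z)}\bigr]\le \exp\Bigl(\frac{u^2}{8}\sum_{i=1}^s c_i^2\Bigr).
\]

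Finally, Markov's inequality yields
\[
\Pr[Z-\mathbb{E}Z\ge\lambda]\le \exp\Bigl(-u\lambda+\frac{u^2}{8}\sum_{i=1}^s c_i^2\Bigr).
\]
Choosing $u=4\lambda/\sum_i c_i^2$ gives the bound $\exp\bigl(-2\lambda^2/\sum_i c_i^2\bigr)$. Applying the same argument to $-f$, which satisfies the same bounded-difference conditions, handles the lower tail. A union bound over the two tails gives the factor $2$ in the statement of Lemma~\ref{lem:BDI}.
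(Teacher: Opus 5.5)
Your proof is correct. The paper gives no proof of this lemma and simply cites Theorem 9.1.3 of Zhao; your argument (Doob martingale, the sharpened range $D_i\in[L_i,L_i+c_i]$, Hoeffding's lemma, then Markov with $u=4\lambda/\sum_i c_i^2$) is the standard proof of that result, and the only case you leave implicit is the degenerate one $\sum_i c_i^2=0$, where $f$ is constant.
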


\begin{proof}[Proof of Claim~\ref{claim-concentration}]
    For each $K\in V(\mathcal{H})\subseteq \binom{[n]}{k}$, let $\mathcal{J}(K)$ be the multiset of all labelled copies $\mathcal{J}'$ of $\mathcal{J}$ in $\binom{[n]}{k}$ that contain $K$. %, i.e., $$\mathcal{J}(K)=\left\{\mathcal{J}'\subseteq \binom{[n]}{k}: \mathcal{J}'\text{ is a copy of } \mathcal{J},~K\in \mathcal{J}'\right\}.$$
    Distinct labelled copies are counted separately, even if they have the same underlying vertex set as parallel edge of $\mathcal H$. In particular, cardinalities and sums involving $\mathcal{J}(K)$ are always taken with multiplicity. The vertex set $V(\mathcal{J}')$ of any $\mathcal{J}'\in \mathcal{J}(K)$ contains $K$, while the remaining $m-k$ vertices can be chosen arbitrarily from $[n]\setminus K$. Hence, 
    $$|\mathcal{J}(K)|= \lambda_{\mathcal{J}}\binom{n-k}{m-k},$$
    where $\lambda_{\mathcal{J}}$ denotes the number of labelled copies of $\mathcal{J}$ into $\binom{[m]}{k}$ with one edge fixed. By the definition of $\mathcal{H}$, the degree of $K$ in $\mathcal{H}$ is 
    $$\deg_{\mathcal{H}}(K)=|\{\mathcal{J}'\in \mathcal{J}(K): \mathcal{J}' \text{ forms an edge of } \mathcal{H}\}|=\sum_{\mathcal{J}'\in \mathcal{J}(K)}Y_{\mathcal{J}'}, $$
    where $Y_{\mathcal{J}'}$ is the indicator of the event that $\mathcal{J}'$ is an edge of $\mathcal{H}$. Given $K\in V(\mathcal{H})$, the probability that a fixed $\mathcal{J}'\in\mathcal{J}(K)$ appears as an edge is
    $$\Pr[Y_{\mathcal{J}'}=1]=(1-\gamma)^{|\mathcal{J}|-1}\gamma^{\binom{m}{k}-|\mathcal{J}|}:=p.$$ 
    Therefore, the expected degree of $K$ is
    $$d:=\mathbb{E}[\deg_{\mathcal{H}}(K)]=\sum_{\mathcal{J}'\in \mathcal{J}(K)}\Pr[Y_{\mathcal{J}'}=1]
    =p\cdot \lambda_{\mathcal{J}}\binom{n-k}{m-k}=(c+o(1))n^{m-k},$$
    where $c=c(\mathcal{J},\gamma,m,k)$ is a constant independent of $n$.

    Observe that for a fixed $K\in V(\mathcal{H})$, the degree $\deg_{\mathcal{H}}(K)$ is a function of the independent random variables $\{X_T:T\in \binom{[n]}{k}\setminus\{K\}\}$, where $X_T$ is the indicator that $T\in\binom{[n]}{k}$ is colored blue.
    %, i.e., $$\deg_{\mathcal{H}}(K)=f\left(X_T:T\in \binom{[n]}{k}\Big\backslash\{K\}\right).$$
    We now apply Lemma~\ref{lem:BDI} to prove the concentration of $\deg_{\mathcal{H}}(K)$ around its expectation. For any $T\in \binom{[n]}{k}\setminus\{K\}$ with $|T\cap K|=i\in\{0,1,\ldots,k-1\}$, changing the color of such a $T$ affects $\deg_{\mathcal{H}}(K)$ by at most $b_T=O(n^{m-2k+i})$; indeed, an edge of $\mathcal{H}$ containing both $T$ and $K$ leaves at most $\binom{n-(2k-i)}{m-(2k-i)}$ choices for the remaining vertices. Therefore, 
    $$\sum_{T\in \binom{[n]}{k}\setminus\{K\}}b_T^2=\sum_{i=0}^{k-1}\sum_{T:~|T\cap K|=i}b_T^2=\sum_{i=0}^{k-1}O(n^{k-i+2(m-2k+i)})=O(n^{2m-2k-1}),$$ where the second equality follows from the fact that the number of $T$ with $|T\cap K|=i$ is at most $\binom{k}{i}\binom{n-k}{k-i}=O(n^{k-i})$.   

    Applying Lemma~\ref{lem:BDI} to $\deg_{\mathcal{H}}(K)$, we have
    %$\deg_{\mathcal{H}}(K)=f\left(X_T:T\in \binom{[n]}{k}\setminus\{K\}\right)$
    \begin{align*}
        \Pr[\deg_{\mathcal{H}}(K)\neq(1\pm d^{-\gamma})d]&=\Pr[|\deg_{\mathcal{H}}(K)-d|\ge d^{1-\gamma}]\le2\exp{\left(-\frac{2d^{2(1-\gamma)}}{\sum_{T\in \binom{[n]}{k}\setminus\{K\}}b_T^2}\right)}\\
        &=2\exp{\left(-\Theta\left(\frac{n^{2(1-\gamma)(m-k)}}{n^{2(m-k)-1}}\right)\right)}=2\exp{\left(-\Theta\left(n^{1-2\gamma(m-k)}\right)\right)}.
    \end{align*}
    Hence,
    \begin{align*}
        \Pr\left[\forall K\in V(\mathcal{H}), \deg_{\mathcal{H}}(K)=(1\pm d^{-\gamma})d\right]
        &=1-\Pr\left[\exists K\in V(\mathcal{H}), \deg_{\mathcal{H}}(K)\ne(1\pm d^{-\gamma})d\right]\\
        %&\ge 1-\sum_{K\in V(\mathcal{H})}\Pr[\deg_{\mathcal{H}}(K)\ne(1\pm d^{-\gamma})d]\\
        \ge1-\binom{n}{k}\cdot 2\exp{\left(-\Theta\left(n^{1-2\gamma(m-k)}\right)\right)}
        %\Pr[\deg_{\mathcal{H}}(K)\ne(1\pm d^{-\epsilon})d]\\
        &=1-2\exp{\left(O(\log n)-\Theta\left(n^{1-2\gamma(m-k)}\right)\right)}=1-o(1),
    \end{align*}
    because $\gamma<1/(2(m-k))$. In other words, with high probability every vertex in $\mathcal{H}$ is contained in $(1\pm d^{-\gamma})d$ edges, completing the proof of the claim. 
    \end{proof}
\end{document}